\newtheorem{thm}{Theorem}[section]
\newtheorem{prop}[thm]{Proposition}
\newtheorem{lemma}[thm]{Lemma}
\newtheorem{cor}[thm]{Corollary}
\newtheorem{problem}[thm]{Problem}
\theoremstyle{definition}
\newtheorem{definition}[thm]{Definition}
\newtheorem{rem}[thm]{Remark}
\newtheorem{exam}[thm]{Example}
\newtheorem*{ack}{Acknowledgment}
\newcommand{\sq}{\hfill $\square$}
\newcommand{\kah}{K\"{a}hler}
\newcommand{\dol}{\sqrt{-1}\partial \overline{\partial}}
\newcommand{\hol}{H\"{o}lder}
\newcommand{\ma}{Monge-Amp$\grave{{\rm e}}$re}
\def\address#1#2{\begingroup
\noindent\parbox[t]{16cm}{%
\small{\scshape\ignorespaces#1}\par\vskip1ex
\noindent\small{\itshape E-mail address}%
\/: #2\par\vskip4ex}\hfill%
\endgroup}%
\title{Skoda-Zeriahi type integrability and entropy compactness for some measure with $L^1$-density}
\author{Takahiro Aoi}
\date{\today}
\begin{document}
\maketitle

\begin{abstract}
In this paper, we prove the Skoda-Zeriahi type integrability theorem with respect to some measure with $L^1$-density.
In addition, we introduce the log-log threshold in order to detect singularities of {\kah} potentials.
We prove the positivity of the integrability threshold for such a measure and {\kah} potentials with uniform log-log threshold.
As an application, we prove the entropy compactness theorem for a family of potential functions of Poincar\'{e} type {\kah} metrics with uniform log-log threshold.
The Ohsawa-Takegoshi $L^2$-extension theorem and Skoda-Zeriahi's integrability theorem play a very important role in this paper.
\end{abstract}


\tableofcontents

\section{Introduction}

As an important result in complex pluripotential theory, Zeriahi proved the following uniform version of Skoda's integrability theorem.

\begin{thm}{\rm (\cite{Ze}, \cite{Sk}, see \cite[Theorem 2.50]{GZ})}
\label{SZ}
Let $\Omega \subset \mathbb{C}^n$ be an open subset and $\mathcal{U}$ be an $L^1$-compact family in ${\rm PSH}(\Omega)$.
For a compact subset $K \Subset \Omega$, we set $\mathcal{V}(\mathcal{U},K ):= \sup_{x \in K} \{ \nu_\varphi (x)\,\,  | \,\, \varphi \in \mathcal{U}  \}$, where $\nu_\varphi (x)$ denotes the Lelong number of $\varphi$ at $x$.
Then, for any $\alpha \in \left( 0, \frac{2}{\mathcal{V}(\mathcal{U},K)} \right)$, there are an open neighborhood $E$ of $K$ and a constant $C>0$ depending only on $n, K, \mathcal{U}$ and $ \alpha$ such that
\begin{equation}
\label{Skoda}
\int_{E} e^{-\alpha \varphi} d \lambda < C, \,\,\,\,\, \forall \varphi \in \mathcal{U}.
\end{equation}
\end{thm}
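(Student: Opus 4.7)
The plan is to combine the classical (pointwise) Skoda integrability theorem with the $L^1$-compactness of $\mathcal{U}$ to upgrade to a family-uniform estimate. Since $\alpha < 2/\mathcal{V}(\mathcal{U},K)$, there is some $\eta > 0$ with $\alpha(\mathcal{V}(\mathcal{U},K) + \eta) < 2$, so the singularity exponent stays away from the critical value $2$ uniformly in $(\varphi,x) \in \mathcal{U}\times K$.

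First I would isolate a quantitative form of Skoda's local theorem: there exists a constant $C = C(n,\alpha,\nu_0)$ such that any $\varphi \in {\rm PSH}(B(x_0,2r))$ with $\nu_\varphi(x_0) \leq \nu_0 < 2/\alpha$ and $\sup_{B(x_0,2r)}\varphi \leq 0$ satisfies
\[
\int_{B(x_0,r)} e^{-\alpha\varphi}\, d\lambda \leq C\, r^{2n},
\]
with $C$ depending only on $n,\alpha$ and the strict upper bound $\nu_0$, not on the particular $\varphi$; this is the quantitative output of the $L^2$-existence techniques (Hörmander, Ohsawa-Takegoshi) emphasized in the paper. Second, I would use Hartogs' lemma together with $L^1$-compactness of $\mathcal{U}$ to obtain a uniform upper bound $\sup_{K'}\varphi \leq M$ on some compact neighborhood $K' \supset K$, so after subtracting $M$ I may assume $\varphi \leq 0$ on $K'$ for every $\varphi \in \mathcal{U}$. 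Third, I would invoke Demailly's theorem on the upper semicontinuity of the map $(\psi,x) \mapsto \nu_\psi(x)$ in the product of the $L^1_{\rm loc}$-topology on psh functions and the Euclidean topology on $\Omega$: at each $(\varphi_0,x_0) \in \mathcal{U}\times K$ this yields an $L^1$-neighborhood $\mathcal{W}(\varphi_0)$ and a radius $r(\varphi_0,x_0) > 0$ with $\nu_\psi(x) \leq \mathcal{V}(\mathcal{U},K) + \eta$ for all $(\psi,x) \in \mathcal{W}(\varphi_0) \times B(x_0,2r)$. Compactness of $\mathcal{U}$ in $L^1_{\rm loc}$ and of $K$ in $\Omega$ furnishes finite subcovers; the quantitative Skoda estimate then applies on each piece with a uniform constant, and summing the finitely many contributions produces the desired open neighborhood $E$ of $K$ and the global constant $C$.

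The main obstacle I anticipate is the first step: verifying that Skoda's integrability constant can be chosen to depend only on a \emph{strict} upper bound $\nu_0 < 2/\alpha$ rather than on the exact Lelong number of the individual $\varphi$. The constant does blow up as $\alpha\nu_0 \to 2$, so the uniform gap $\alpha(\mathcal{V}(\mathcal{U},K) + \eta) < 2$ is precisely what saves us, and careful bookkeeping of constants through the $L^2$-existence argument is needed. Once this quantitative Skoda bound and Demailly's upper semicontinuity of Lelong numbers are granted, the compactness/finite-cover argument above is essentially routine.
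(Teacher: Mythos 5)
The paper itself does not prove Theorem \ref{SZ}: it is quoted as a known result of Skoda and Zeriahi with a pointer to \cite[Theorem 2.50]{GZ}, so there is no internal proof to compare against and your attempt must be judged on its own. The obstacle you flag in your first step is not a bookkeeping issue --- the ``quantitative Skoda lemma'' you want is simply false. There is no constant $C(n,\alpha,\nu_0)$ such that every $\varphi\in{\rm PSH}(B(x_0,2r))$ with $\sup_{B(x_0,2r)}\varphi\le 0$ and $\nu_\varphi(x_0)\le\nu_0<2/\alpha$ satisfies $\int_{B(x_0,r)}e^{-\alpha\varphi}\,d\lambda\le C r^{2n}$. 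For $n=1$ take $\varphi_j(z)=j\max\{\log(|z|/2),-j\}$ on $B(0,2)$: each $\varphi_j$ is plurisubharmonic, $\sup_{B(0,2)}\varphi_j=0$, and $\nu_{\varphi_j}(x)=0$ for every $x$ because $\varphi_j$ is locally bounded; yet $\varphi_j\equiv -j^2$ on $\{|z|\le 2e^{-j}\}$, so $\int_{B(0,1)}e^{-\alpha\varphi_j}\,d\lambda\ge 4\pi e^{\alpha j^2-2j}\to\infty$ for every $\alpha>0$. (The analogous construction with $\log|z-x_0|$ works in $\mathbb{C}^n$.) The divergence is invisible to Lelong numbers, so no strictness of the gap $\alpha\nu_0<2$ can rescue the lemma. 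Consistently with the theorem, this family is not $L^1$-compact ($\Vert\varphi_j\Vert_{L^1(B(0,2))}\gtrsim j\to\infty$), and this is the real lesson: the two consequences of $L^1$-compactness that you extract --- a uniform $\sup$ bound via Hartogs and a uniform Lelong bound via upper semicontinuity of $(\psi,x)\mapsto\nu_\psi(x)$ --- are strictly weaker than compactness and do not imply the conclusion. Note also that the theorem's constant is allowed to depend on the whole family $\mathcal{U}$, not merely on $\mathcal{V}(\mathcal{U},K)$; this is not an accident.

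Your covering scaffolding (steps 2 and 3) is sound and does appear in the standard arguments, but the engine must be different. In the cited sources the uniformity over the family comes either from Zeriahi's volume--capacity comparison for functions in a fixed Lelong class, or from a contradiction argument: if the bound fails, extract $\varphi_j\to\varphi$ in $L^1_{loc}$ with $\int_E e^{-\alpha\varphi_j}\to\infty$ and invoke the Demailly--Koll\'{a}r semicontinuity theorem (proved with the Ohsawa--Takegoshi extension theorem), whose effective form gives $e^{-\alpha\varphi_j}\to e^{-\alpha\varphi}$ in $L^1$ near $x_0$ once $\alpha\nu_\varphi(x_0)<2$ guarantees, via Skoda's pointwise theorem, that $e^{-\alpha\varphi}$ is integrable there. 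In other words, what must be made uniform along $L^1$-convergent sequences is the integral itself, not the constant in the pointwise Skoda estimate.
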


Here, $d \lambda$ denotes the Lebesgue measure on $\mathbb{C}^{n}$ and the Lelong number of $\varphi$ at $x$ is defined by $$\nu_\varphi (x):= \sup \left\{ r \geq 0  \, \middle| \,\, \exists C_r > 0 \,\,\, {\rm s.t.} \,\,\, \varphi(z) \leq r \log | z - x|+C_r , \,\, \forall x \in U \right\},$$where $U$ is some (small) neighborhood of $x$.
This theorem has a very important application to the problem of canonical {\kah} metrics on compact complex manifolds, i.e., $\alpha$-invariant introduced by Tian \cite{Ti} (see also \cite[Theorem 4.4.5]{Ho1}).
If we consider the singular case when a measure $m$ has $L^p$-density, i.e., $dm = f d \lambda$ for some $f \in L^p(d\lambda), p>1$, we obtain the similar integrability result by the {\hol} inequality (see {\it tame measures} in the sense of \cite[Definition 1.3]{BBEGZ} and measures defined by {\kah} metrics with {\it conical singularities} along a divisor in \cite[\S 4.4]{Zh}.
For more general cases, see \cite{Ka}).
So, it is a natural problem that we consider the Skoda-Zeriahi type integrability theorem for measures with $L^1$-density (which is not $L^p$-integrable for any $p>1$).
However, we can find easily a bad example, i.e., there exists a plurisubharmonic function with {\bf zero} Lelong number such that the integral in (\ref{SZ}) diverges (see Example \ref{example 1}  and \ref{example 2} in this paper).
So, the Skoda-Zeriahi type integrability is a very subtle problem with respect to measures with $L^1$-density.

In this paper, we prove the Skoda-Zeriahi type integrability theorem for the measure with the $L^1$-density of the form $|z|^{-2} ( \log|z|^{-2})^{\gamma - 2}$ for $\gamma \in [0,1)$ by using the Ohsawa-Takegoshi $L^2$-extension theorem.
In addition, we prove the positivity of the integrability threshold for this measure and a certain family of {\kah} potentials.

Let $\Omega \subset \mathbb{C}^n$ be a bounded pseudoconvex domain such that $|z_1| < e^{-1}$.
We consider the following hyperplane in $\Omega$:
$$
\Omega^\prime:= \Omega \cap \{ z_1 = 0 \}.
$$
We write the Lebesgue measure on $\Omega^\prime$ as $d \lambda^\prime$.
The first result in this paper is the following.

\begin{thm}
\label{L1}
We fix $\gamma \in [0,1)$.
Let $\mathcal{U}$ be an $L^1(d\lambda)$-compact family in ${\rm PSH}(\Omega)$ such that the family $\mathcal{U}^\prime:= \{ \varphi |_{\Omega^\prime} \in {\rm PSH} (\Omega^\prime) \,\, | \,\, \varphi \in \mathcal{U} \}$ is also $L^1(d \lambda^\prime)$-compact.
For any $K \Subset \Omega^\prime$ and $0 < \alpha < \frac{2}{\mathcal{V}(\mathcal{U}^\prime , K)}$, there exists an open neighborhood $V$ of $K$ in $\Omega$ and a constant $C> 0$ depending only on $n, \alpha, \gamma$ and $K$ such that we have
$$
\int_{V}  \frac{e^{-\alpha \varphi } d \lambda}{ |z_1|^2 ( \log |z_1|^{-2})^{2-\gamma } } < C, \,\,\,\,\, \forall \varphi \in \mathcal{U}.
$$
\end{thm}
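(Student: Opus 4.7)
The plan is to combine Theorem \ref{SZ} on the hyperplane $\Omega^\prime$ with an Ohsawa--Takegoshi $L^2$-extension theorem adapted to the singular weight $|z_1|^{-2}(\log|z_1|^{-2})^{\gamma-2}$, and then use the pointwise normalization $F|_{\Omega^\prime}\equiv 1$ via a Cauchy-type argument to remove the holomorphic extension from the final estimate. Since $\mathcal{U}^\prime$ is $L^1(d\lambda^\prime)$-compact and $0<\alpha<2/\mathcal{V}(\mathcal{U}^\prime,K)$, Theorem \ref{SZ} applied on $\Omega^\prime$ supplies an open neighborhood $E\subset\Omega^\prime$ of $K$ and a constant $C_0>0$ with $\int_E e^{-\alpha\,\varphi|_{\Omega^\prime}}\,d\lambda^\prime\le C_0$ for every $\varphi\in\mathcal{U}$; the $L^1$-compactness of $\mathcal{U}$ on $\Omega$ also yields a uniform upper bound $\varphi\le M$ on any fixed compact subset of $\Omega$.

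Next I fix a tubular product neighborhood $V=E\times D_\rho\subset\Omega$ of $K$ with $\rho<e^{-1}$. For each $\varphi\in\mathcal{U}$ I apply a version of the Ohsawa--Takegoshi $L^2$-extension theorem (Ohsawa's $L^2$-extension with negligible weight when $\gamma=0$, and an analogous adaptation for $\gamma\in(0,1)$) with PSH weight $\alpha\varphi$, extending $f\equiv 1$ from $\Omega^\prime$ to $F=F_\varphi\in\mathcal{O}(\Omega)$ satisfying $F|_{\Omega^\prime}\equiv 1$ and
$$\int_V\frac{|F|^2\,e^{-\alpha\varphi}\,d\lambda}{|z_1|^2(\log|z_1|^{-2})^{2-\gamma}}\le C_1\int_E e^{-\alpha\,\varphi|_{\Omega^\prime}}\,d\lambda^\prime\le C_1C_0,$$
with $C_1=C_1(n,\alpha,\gamma,V)$.

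I then shrink to $V_0:=E_0\times D_{\rho^\prime}\subset V$ and establish a uniform pointwise lower bound $|F_\varphi|\ge 1/2$ on $V_0$, independent of $\varphi$. Using $\varphi\le M$, the weighted $L^2$-bound yields, after multiplying by $e^{\alpha M}$ and a constant depending only on $r$ and $\gamma$, a uniform unweighted $L^2(d\lambda)$-bound for $F_\varphi$ on the annular product $\{r/2<|z_1|<r\}\times E_0$ for a small $r>0$; combined with the plurisubharmonicity of $|F|^2$, this yields slicewise $L^2$-bounds and hence uniform Cauchy bounds on $|\partial_{z_1}F_\varphi|$ on a neighborhood of $\Omega^\prime\cap V_0$. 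Integrating $\partial_{z_1}F_\varphi$ along the segment from $(0,z^\prime)$ gives $|F_\varphi(z_1,z^\prime)-1|\le C_2|z_1|$ for a uniform $C_2$, so taking $\rho^\prime<1/(2C_2)$ forces $|F_\varphi|\ge 1/2$ on $V_0$. Consequently,
$$\int_{V_0}\frac{e^{-\alpha\varphi}\,d\lambda}{|z_1|^2(\log|z_1|^{-2})^{2-\gamma}}\le 4\int_{V_0}\frac{|F_\varphi|^2\,e^{-\alpha\varphi}\,d\lambda}{|z_1|^2(\log|z_1|^{-2})^{2-\gamma}}\le 4C_1C_0.$$

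The main obstacle is the Ohsawa--Takegoshi step: setting up an $L^2$-extension with the precise singular weight $|z_1|^{-2}(\log|z_1|^{-2})^{\gamma-2}$ valid for every $\gamma\in[0,1)$ with a constant depending only on $n,\alpha,\gamma,V$. The classical case $\gamma=0$ is Ohsawa's $L^2$-extension with negligible weight, but the full range $\gamma\in(0,1)$ requires a careful quantitative adaptation of the $\overline{\partial}$-technique underpinning the extension theorem.
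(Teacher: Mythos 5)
Your proposal is correct and follows essentially the same route as the paper: Skoda--Zeriahi on the slice $\Omega^\prime$, an Ohsawa--Takegoshi extension of $f\equiv 1$ with the singular weight $|z_1|^{-2}(\log|z_1|^{-2})^{\gamma-2}$, and a mean-value/Cauchy argument to get the uniform lower bound $|F|\geq 1/2$ near $\Omega^\prime$. The extension step you flag as the main obstacle is exactly the paper's Proposition \ref{main prop}, which is proved there by modifying the weight function in B.Y.~Chen's argument (taking $\eta=-\rho+(-\rho)^\gamma$) and is also available in the literature (McNeal--Varolin, Guan--Zhou), so invoking it closes the gap.
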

Note that $ \int_{V} \frac{ d \lambda}{ |z_1|^2 ( \log |z_1|^{-2})^{2-\gamma} }$ is finite unless $\gamma \geq 1$.
In addition, we can easily show that $|z_1|^{-2} ( \log |z_1|^{-2})^{\gamma - 2} \in L^1(d\lambda) \setminus L^p(d\lambda)$ for any $p>1$.

We consider uniform integrability of plurisubharmonic functions to the study of canonical {\kah} metrics of Poincar\'{e} type.
Let $(X, \omega)$ be an $n$-dimensional compact {\kah} manifold and $D$ be a smooth divisor.
Let $s_D \in H^0 (X , \mathscr{O} (D))$ be a defining section of $D$, where $\mathscr{O}(D)$ is the associated line bundle defined by $D$.
We fix a Hermitian metric $h$ on $\mathscr{O}(D)$ and set
$$
\Psi:= - \log \log \Vert s_D \Vert^{-2}_{h} .
$$
A {\kah} metric $\omega_\Phi = \omega + \dol \Phi$ is said to be of Poincar\'{e} type if it satisfies (i) $\omega_\Phi$ is quasi--isometric to $ \omega_{P} := \frac{ \sqrt{-1} dz_1 \wedge d \overline{z}_1}{ |z_1|^2 ( \log |z_1|^{-2})^{2} } + \sqrt{-1} dz_k \wedge d \overline{z}_k $ on any local holomorphic coordinates $(z_1,z_2,...,z_n)$ with $D = \{ z_1 =0 \}$; (ii) $\Phi = O(\Psi) $ as $s_D \to 0$; (iii) the norms of the differentials of $\Phi$ is bounded at any order with respect to $\omega_P$ (see the details in Auvray's paper \cite[Section 1]{Au}).
The top wedge product $\omega_\Phi^n$ defines the measure on $X$ with $L^1 (\omega^n)$-density.
More precisely, we have
$$
\omega_{\Phi}^{n} \approx \frac{1}{\Vert s_D \Vert^{2}_{h} (\log \Vert s_D \Vert^{-2}_{h})^2} \omega^{n}
$$
on a neighborhood of $D$.
By the computation in \cite[\S1.3]{Au}, the measure $\omega_\Phi^n$ has full mass, i.e., $\int_X \omega_\Phi^n = \int_X \omega^n = V$.
(So, we have $\Phi \in \mathcal{E} (X ,\omega)$.)
By the condition (ii) above, we know that $\Phi \to -\infty$ near $D$, so any Poincar\'{e} type {\kah} potential function is unbounded (see \cite{DL}).

We note that the space of $\omega$-plurisubharmonic functions ${\rm PSH} (X,\omega)$ (in particular, the full mass class $\mathcal{E} (X,\omega)$) contains a function $\varphi$ such that $e^{-\alpha \varphi}$ is not integrable with respect to the singular measure defined by a Poincar\'{e} type {\kah} metric for any $\alpha>0$ and $\varphi$ is not a potential function of Poincar\'{e} type {\kah} metrics (see Example \ref{example 2}).
In order to obtain the integrability for potential functions of Poincar\'{e} type {\kah} metrics (with respect to some singular measure), we introduce the following threshold for potential functions (see Remark \ref{reason}).
\begin{definition}[log-log threshold]
Fix an open neighborhood $V$ of $D$.
For $\Phi \in {\rm PSH}(X, \omega ) $, we define the {\it log-log threshold} of $\Phi$ by
\begin{equation*}
\nu_D (\Phi):= \inf \left\{ c > 0 \,\, \middle| \,\,  \exists \varphi \in {\rm PSH} (V, \omega ) \,\, {\rm s.t.}\,\, \varphi |_D \in {\rm PSH}_0(D, \omega|_D ) \,\, {\rm and}\,\,  \Phi \geq c \Psi + \varphi \,\, {\rm on}\,\, V \right\}.
\end{equation*}
For $\mathcal{U} \subset {\rm PSH}(X, \omega )$, we define the log-log threshold of $\mathcal{U}$ by
$
\nu_D (\mathcal{U}):= \sup \{ \nu_D(\Phi) | \Phi \in \mathcal{U} \}.
$
\end{definition}
Here, ${\rm PSH}_0(D, \omega|_D ) :=  \left\{\,  \varphi \in {\rm PSH} (D, \omega |_D) \, \middle| \, \sup_{D} \varphi =0 \, \right\}$ (normalized on $D$ !).
For $\gamma \in [0,1)$, we define the volume form $d \mu_\gamma$ by
$$
d\mu_\gamma:= \frac{1}{\Vert s_D \Vert^{2}_{h} (\log \Vert s_D \Vert^{-2}_{h})^{2-\gamma}} \omega^{n}.
$$
Note that the density function $\Vert s_D \Vert^{-2}_{h} (\log \Vert s_D \Vert^{-2}_{h})^{\gamma -2}$ is in $L^1 (\omega^n)$ but not in $L^p (\omega^n)$ for any $p>1$.
By scaling, we may assume that $d \mu_\gamma$ is a probability measure, i.e., $\int_X d\mu_\gamma = 1$.
By Theorem \ref{SZ} and the proof of Theorem \ref{L1}, we obtain the positivity of the integrability threshold (see Definition \ref{alpha1}), which is the second result:
\begin{thm}
\label{alpha invariant}
Fix $\gamma \in [0,1)$.
Let $\mathcal{U}  \subset {\rm PSH} (X , \omega)$ be an $L^1 (\omega^n)$-compact family such that $\nu_D (\mathcal{U}) < \infty$.
Then, we have $\alpha (\mathcal{U}, d \mu_\gamma) \geq \min \left\{ \frac{2}{ \nu(\mathcal{U}, X)}, \,  \alpha_{\omega |_D} (d \lambda^\prime) , \,  \frac{1-\gamma}{\nu_D (\mathcal{U})} \right\}$.
More precisely, for any $0<\alpha < \min \left\{ \frac{2}{ \nu(\mathcal{U}, X)}, \,  \alpha_{\omega |_D} (d \lambda^\prime) , \,  \frac{1-\gamma}{\nu_D (\mathcal{U})} \right\}$, there exists $C_\alpha > 0$ depending on $\alpha$ such that
$$
\int_{X} e^{-\alpha \Phi  } d \mu_\gamma < C_\alpha , \,\,\,\, \forall \Phi \in \mathcal{U}.
$$
\end{thm}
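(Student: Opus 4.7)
The plan is to decompose $X = (X \setminus V) \sqcup V$ using the fixed neighborhood $V$ of $D$ from the log-log threshold definition, and handle the two regions with different tools. On $X \setminus V$ the density of $d\mu_\gamma$ is bounded, so covering $X \setminus V$ by finitely many coordinate charts and applying Theorem \ref{SZ} to $\mathcal{U}$ gives a uniform bound $\int_{X \setminus V} e^{-\alpha \Phi}\, d\mu_\gamma \leq C$ for any $\alpha < 2/\nu(\mathcal{U}, X)$. The remainder of the work is the estimate on $V$.

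Fix $\alpha < \min\{2/\nu(\mathcal{U},X),\; \alpha_{\omega|_D}(d\lambda'),\; (1-\gamma)/\nu_D(\mathcal{U})\}$ and pick $c$ with $\nu_D(\mathcal{U}) < c < (1-\gamma)/\alpha$, so that $\gamma' := \gamma + \alpha c \in [0,1)$. For each $\Phi \in \mathcal{U}$, since $c > \nu_D(\Phi)$, the log-log threshold yields $\varphi_\Phi \in \mathrm{PSH}(V,\omega)$ with $\varphi_\Phi|_D \in \mathrm{PSH}_0(D, \omega|_D)$ and $\Phi \geq c\Psi + \varphi_\Phi$ on $V$. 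Because $e^{-\alpha c \Psi} = (\log \Vert s_D\Vert_h^{-2})^{\alpha c}$, multiplying out against $d\mu_\gamma$ gives
$$
e^{-\alpha \Phi}\, d\mu_\gamma \;\leq\; \frac{e^{-\alpha \varphi_\Phi}}{\Vert s_D\Vert_h^2\,(\log\Vert s_D\Vert_h^{-2})^{2-\gamma'}}\, \omega^n \quad \text{on } V,
$$
which is exactly the form of integrand controlled by Theorem \ref{L1} with parameter $\gamma'$.

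The delicate step is upgrading the non-canonical choices $\{\varphi_\Phi\}$ into an $L^1$-compact family so that Theorem \ref{L1} can be applied. After normalizing $\sup_X \Phi = 0$ in $\mathcal{U}$ (permissible by $L^1$-compactness) and using that $\Psi$ is bounded above on $V$, the inequality $\varphi_\Phi \leq \Phi - c\Psi$ furnishes a uniform upper bound $\varphi_\Phi \leq M$; since $\sup_D \varphi_\Phi = 0$, the suprema of the $\omega$-psh functions $\varphi_\Phi$ lie in $[0,M]$, so Hartogs' compactness gives $L^1(\omega^n)$-compactness on compact subsets of $V$, while $\{\varphi_\Phi|_D\} \subset \mathrm{PSH}_0(D,\omega|_D)$ is automatically $L^1$-compact on the compact $D$. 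Then cover a compact neighborhood of $D$ by finitely many charts $U_i \subset V$ on which $D \cap U_i = \{z_1 = 0\}$ and $\omega = \dol \rho_i$, and set $\tilde\varphi_\Phi := \varphi_\Phi + \rho_i$, which is psh in the classical sense; since $\Vert s_D\Vert_h^2$ is comparable to $|z_1|^2$ on $U_i$, the previous display reduces to the measure $|z_1|^{-2}(\log|z_1|^{-2})^{\gamma'-2} d\lambda$ times $e^{-\alpha \tilde\varphi_\Phi}$ up to bounded factors. The bound $\alpha < \alpha_{\omega|_D}(d\lambda')$ supplies, via Theorem \ref{SZ} applied on $D$, the Lelong-number condition $\alpha < 2/\mathcal{V}(\mathcal{U}', K_i)$ required by Theorem \ref{L1}, which yields a uniform integral bound on each $U_i$; summing over $i$ and combining with the estimate on $X\setminus V$ finishes the proof.

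The main obstacle I anticipate is this compactness upgrade for $\{\varphi_\Phi\}$: the definition of $\nu_D$ supplies some comparison function $\varphi_\Phi$ only existentially for each $\Phi$ individually, and uniform $L^1$-control rests entirely on normalizing $\mathcal{U}$ and exploiting the fact that $\Psi$ stays bounded away from $-\infty$ on $V$. Without this step, neither Theorem \ref{L1} nor the chart localization can be applied uniformly in $\Phi$; once in place, the result is a clean two-region reduction to Theorems \ref{SZ} and \ref{L1}.
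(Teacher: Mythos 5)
Your overall strategy coincides with the paper's: split $X$ into a small neighborhood of $D$ and its complement, dispose of the complement with Theorem \ref{SZ}, and near $D$ use $\Phi\geq c\Psi+\varphi_\Phi$ with $\nu_D(\mathcal{U})<c<(1-\gamma)/\alpha$ to absorb $e^{-\alpha c\Psi}$ into the measure, reducing the problem to bounding $\int e^{-\alpha\varphi_\Phi}\,d\mu_{\gamma'}$ with $\gamma'=\gamma+\alpha c<1$, which is then pushed down to an integral over $D$ by the Ohsawa--Takegoshi mechanism behind Theorem \ref{L1}. Two steps in your execution near $D$, however, do not hold as written. First, the claim that $\alpha<\alpha_{\omega|_D}(d\lambda')$ ``supplies, via Theorem \ref{SZ} applied on $D$, the Lelong-number condition $\alpha<2/\mathcal{V}(\mathcal{U}',K_i)$'' is false once $\dim_{\mathbb{C}}D\geq 2$: uniform integrability of $e^{-\alpha\psi}$ only excludes Lelong numbers $\geq 2(n-1)/\alpha$ (Skoda's non-integrability criterion is dimension-dependent), so the hypothesis $\alpha<2/\mathcal{V}(\mathcal{U}',K_i)$ of Theorem \ref{L1} is simply not verified and the theorem cannot be invoked as a black box. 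The paper sidesteps this entirely: since $\varphi_\Phi|_D\in{\rm PSH}_0(D,\omega|_D)$ and $\alpha<\alpha_{\omega|_D}(d\lambda')$, the uniform bound $\int_D e^{-\alpha\varphi_\Phi}\,d\lambda'\leq C_\alpha$ holds by the very definition of the $\alpha$-invariant (and Theorem \ref{Tian}), with no compactness of the family $\{\varphi_\Phi\}$ and no Lelong-number discussion; one then \emph{repeats the proof} of Theorem \ref{L1} --- extend $f\equiv 1$ with weight $e^{-\alpha\varphi_\Phi}$ and get $|F|>1/2$ on a uniform neighborhood $W$ --- rather than citing its statement. This also makes your entire ``compactness upgrade'' for $\{\varphi_\Phi\}$ unnecessary.

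Second, your derivation of the uniform upper bound $\varphi_\Phi\leq M$ from $\varphi_\Phi\leq\Phi-c\Psi$ is backwards: $-c\Psi=c\log\log\Vert s_D\Vert_h^{-2}$ tends to $+\infty$ as $s_D\to 0$, so the inequality gives no upper bound on $\varphi_\Phi$ near $D$, only on compact subsets of $V\setminus D$. The upper bound actually needed to run the mean-value and Cauchy-estimate step of Theorem \ref{L1} must instead come from the normalization $\sup_D\varphi_\Phi=0$ combined with the maximum principle for the local psh potentials $\varphi_\Phi+\rho_i$ (or from the bound on $\partial W'$ for $W'\Subset V$ that your inequality does give). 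With these two repairs --- replace the appeal to Theorem \ref{L1} by a rerun of its proof fed with the $\alpha$-invariant bound on $D$, and fix the source of the upper bound on $\varphi_\Phi$ --- your argument becomes the paper's proof.
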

Here, $\alpha_\omega (\mathcal{U}, d \mu_\gamma )$ denotes the integrability threshold of $\mathcal{U}$ for the singular measure $d \mu_\gamma $ and $ \alpha_{\omega |_D} (d \lambda^\prime)$ denotes the $\alpha$-invariants of $(D , \omega|_D)$ respectively (see Definition \ref{alpha1} and \ref{alpha2} in this paper).
Note that we obtain the trivial upper bound $\alpha(\mathcal{U} , d \mu_\gamma) \leq \alpha_\omega (\mathcal{U} , d \lambda)$ since $d \lambda \leq C d \mu_\gamma$ for some constant $C>0$, where $d \lambda = V^{-1} \omega^n$.

\begin{rem}
This theorem is an analogue of the lower bound for the $\alpha$-invariant of the singular measure denoted by $\mu_{(1-c)D} $ for $c >0$ in \cite[Proposition 6.2]{B} (see also \cite[Proposition 2.2]{DK}).
In \cite{B}, the corresponding log pair $(X, (1-c)D)$ is {\it klt} in the sense of the minimal model program.
In our case, the corresponding pair $(X , D)$ is {\it log canonical} (which is not klt).
\end{rem}

If a family $\mathcal{U} $ is $L^1(\omega^n)$-compact in the full mass class $ \mathcal{E} (X , \omega)$, we have $\nu(\mathcal{U},X)=0$ \cite[Corollary 1.8]{GZ2} (see also \cite[Proposition 2.11]{Da}).
Thus, we immediately have the following corollary.
\begin{cor}
\label{cpt ver}
Fix $\gamma \in [0,1)$.
Assume that $\mathcal{U} $ is $L^1(\omega^n) $-compact in the full mass class $ \mathcal{E} (X , \omega)$ and $\nu_D (\mathcal{U}) < \infty$.
Then, we have $\alpha (\mathcal{U}, d \mu_\gamma) \geq \min \left\{   \alpha_{\omega |_D} (d \lambda^\prime) , \,  \frac{1-\gamma}{\nu_D (\mathcal{U})} \right\}$.

\end{cor}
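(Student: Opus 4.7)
The plan is to derive Corollary \ref{cpt ver} as an immediate specialization of Theorem \ref{alpha invariant}. The only extra ingredient needed is that, under the stronger hypothesis of $L^1(\omega^n)$-compactness inside the full mass class $\mathcal{E}(X,\omega)$ rather than inside ${\rm PSH}(X,\omega)$, the quantity $\nu(\mathcal{U},X)$ collapses to zero, which erases one of the three terms appearing in the minimum in Theorem \ref{alpha invariant}.

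First I would invoke the result of Guedj-Zeriahi cited just before the statement, \cite[Corollary 1.8]{GZ2} (see also \cite[Proposition 2.11]{Da}): any $L^1(\omega^n)$-compact subset of $\mathcal{E}(X,\omega)$ has $\nu(\mathcal{U},X)=0$. Heuristically, elements of $\mathcal{E}(X,\omega)$ carry no positive-mass singularities (a positive Lelong number would force the Monge-Amp\`ere mass to drop), and $L^1$-compactness prevents Lelong numbers from concentrating in the limit.

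Next I would apply Theorem \ref{alpha invariant} with the same $\gamma \in [0,1)$ and the family $\mathcal{U}$. The hypotheses are met: $\mathcal{U}$ is in particular $L^1(\omega^n)$-compact in ${\rm PSH}(X,\omega)$, and $\nu_D(\mathcal{U}) < \infty$ is assumed. The theorem gives
$$
\alpha(\mathcal{U}, d\mu_\gamma) \;\geq\; \min\left\{ \frac{2}{\nu(\mathcal{U},X)}, \; \alpha_{\omega|_D}(d\lambda^\prime), \; \frac{1-\gamma}{\nu_D(\mathcal{U})} \right\}.
$$
Because $\nu(\mathcal{U},X)=0$, the first term equals $+\infty$ under the usual convention, so it drops out of the minimum and only the last two terms remain, which is exactly the bound in the corollary. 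The quantitative statement about $C_\alpha$ is inherited verbatim from Theorem \ref{alpha invariant}.

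There is no genuine obstacle to overcome; the corollary is a cosmetic but useful repackaging of Theorem \ref{alpha invariant} tailored to families arising in the study of Poincar\'{e} type \kah{} metrics, where membership in $\mathcal{E}(X,\omega)$ is automatic (as noted in the introduction, $\Phi \in \mathcal{E}(X,\omega)$ holds for any Poincar\'{e} type potential because $\int_X \omega_\Phi^n = V$). The only point worth verifying in passing is that the notion of $L^1(\omega^n)$-compactness employed in \cite{GZ2} coincides with the one used in the hypothesis of Theorem \ref{alpha invariant}, which is standard in complex pluripotential theory.
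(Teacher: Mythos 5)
Your proposal is correct and follows exactly the paper's route: the author likewise invokes \cite[Corollary 1.8]{GZ2} (cf.\ \cite[Proposition 2.11]{Da}) to get $\nu(\mathcal{U},X)=0$ for an $L^1(\omega^n)$-compact family in $\mathcal{E}(X,\omega)$, so the term $\tfrac{2}{\nu(\mathcal{U},X)}$ becomes $+\infty$ and drops out of the minimum in Theorem \ref{alpha invariant}. Nothing further is needed.
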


As one of important applications of the integrability threshold, Chen-Cheng \cite{CC2} proved that the geodesic stability of the (twisted) Mabuchi K-energy is equivalent to the existence of (twisted) constant scalar curvature {\kah} metrics on compact complex manifolds (see the extended K-energy in \cite{BDL1}).
The Chen-Tian formula \cite{Chen1, Ti2} says that the Mabuchi K-energy is decomposed into the sum of the entropy part and the energy part.
The entropy part is called the relative entropy.
A certain type of compactness theorem of the relative entropy (\cite[Theorem 2.17]{BBEGZ}, \cite[Theorem 2.8]{BDL1} and \cite[Theorem 4.44]{Da}) is an important tool of the proof of variational characterization of a constant scalar curvature {\kah} metric \cite{CC2}.
In addition, Zheng \cite{Zh} proved the conic version of Chen-Cheng's result, i.e., the existence of constant scalar curvature {\kah} {\it cone} metrics on compact complex manifolds is equivalent to the geodesic stability of the {\it log} Mabuchi K-energy.
({\kah} cone metrics have mild singularities compared with {\kah} metrics of Poincar\'{e} type.)
In the conic (log) case, the compactness result \cite[Lemma 6.3]{Zh} of the log relative entropy (the log twisted Mabuchi K-energy) for positive cone angle also holds and is an important tool of the proof.
In both cases, the compactness theorem of the (log) relative entropy is proved by the positivity of the integrability threshold (in particular, (log) $\alpha$-invariant).

By Theorem \ref{alpha invariant}, we can show the compactness theorem on the finite energy space $(\mathcal{E}^1 (X ,\omega) ,d_1)$ (\cite[Section 3 and 4]{Da} for details and see \S5.1 in this paper) with respect to Poincar\'{e} type {\kah} metrics.
We define the relative entropy with respect to $\mu_\gamma$ by:
\begin{equation*}
{\rm Ent}_{d\mu_\gamma} \left( V^{-1} \omega_{ \Phi}^{n} \right):= \int_{X} \log \left( \frac{V^{-1} \omega_{ \Phi}^{n}}{d\mu_\gamma} \right) \omega_{\Phi}^{n}
\end{equation*}
The third main result in this paper is as follows:

\begin{thm}
\label{ent cpt}
We take $ \mathcal{U}:= \{ \Phi_j \}_j \subset \mathcal{E}^1 (X,\omega)$.
Assume that $\nu_D ( \mathcal{U}) < \infty$ and there exists $K > 0$ such that $d_1(0, \Phi_j) <K$ and ${\rm Ent}_{d\mu_\gamma} (V^{-1} \omega_{\Phi_j}^{n} ) < K$ for all $j$.
Then, there exists a $d_1$-convergent subsequence of $\{ \Phi_j \}_j$.
\end{thm}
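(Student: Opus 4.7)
The plan is to follow the standard entropy compactness strategy of \cite{BBEGZ} and \cite{Da}, with Theorem \ref{alpha invariant} replacing the usual $\alpha$-invariant input to accommodate the singular reference measure $d\mu_\gamma$. The three hypotheses play complementary roles: the uniform log-log threshold bound $\nu_D(\mathcal{U})<\infty$ is precisely what makes Theorem \ref{alpha invariant} applicable; the $d_1$-bound drives weak compactness in $\mathcal{E}^1$; and the entropy bound, combined with exponential integrability through the Young--Gibbs inequality, upgrades weak convergence to $d_1$-convergence.

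First I would normalize. Using the standard inequality $|\sup_X\Phi|\le C\, d_1(0,\Phi)+C$ on $\mathcal{E}^1(X,\omega)$, the $d_1$-bound gives a uniform bound on $\sup_X\Phi_j$, so (after subtracting bounded constants, which perturbs entropy and $d_1$-distance only in a controlled way) the family $\{\Phi_j\}$ lies in an $L^1(\omega^n)$-bounded subset of ${\rm PSH}(X,\omega)$. Extracting a subsequence yields $\Phi_j\to\Phi$ in $L^1(\omega^n)$ with $\Phi\in {\rm PSH}(X,\omega)$, and lower semicontinuity of the Aubin--Yau energy together with the $d_1$-bound places $\Phi\in\mathcal{E}^1(X,\omega)$.

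Next, Theorem \ref{alpha invariant} applied to the $L^1(\omega^n)$-compact family $\{\Phi_j\}$ (which has $\nu_D\le\nu_D(\mathcal{U})<\infty$) produces $\alpha>0$ and $C_\alpha>0$ with
$$\int_X e^{-\alpha\Phi_j}\,d\mu_\gamma\le C_\alpha\quad\text{for all }j.$$
Combining with the entropy bound through the Young--Gibbs inequality, applied to the probability measure $V^{-1}\omega_{\Phi_j}^n$ and the test function $-\alpha\Phi_j$, one obtains a uniform upper bound on $\int_X(-\Phi_j)\,\omega_{\Phi_j}^n$. An analogous estimate testing against $\omega^n$, together with Skoda-type $L^{1+\delta}(\omega^n)$-integrability that follows from positivity of the classical $\alpha$-invariant, supplies the tightness condition $\sup_j\int_{\{\Phi_j<-A\}}(-\Phi_j)\,(\omega_{\Phi_j}^n+\omega^n)\to 0$ as $A\to\infty$.

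To conclude, I would invoke Darvas' characterization of $d_1$-convergence: for a $d_1$-bounded sequence in $\mathcal{E}^1(X,\omega)$, the tightness just established together with $L^1$-convergence is equivalent to $d_1$-convergence. The main obstacle lies in handling the singular nature of $d\mu_\gamma$ near $D$: the classical smooth-reference argument uses uniform $L^p$-density of the reference measure to close the truncation, whereas here $d\mu_\gamma$ is only in $L^1(\omega^n)$. The uniform log-log threshold bound $\nu_D(\mathcal{U})<\infty$, which is already absorbed into Theorem \ref{alpha invariant} through the factor $\frac{1-\gamma}{\nu_D(\mathcal{U})}$, is precisely what rules out the pathological behaviour of Example \ref{example 2} and allows the standard truncation scheme to go through.
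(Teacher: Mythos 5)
Your overall strategy matches the paper's up to the last step: normalize via the $d_1$-bound, extract an $L^1(\omega^n)$-limit $\Phi\in\mathcal{E}^1(X,\omega)$ by Theorem \ref{d1 bounded}, and feed the uniform log-log threshold into Theorem \ref{alpha invariant} (Corollary \ref{cpt ver}) to obtain $\sup_j\int_X e^{-\alpha\Phi_j}\,d\mu_\gamma<\infty$. (Minor slip: it is \emph{upper} semicontinuity of $I$, Proposition \ref{equivalence}, combined with the uniform lower bound on $I(\Phi_j)$, that yields $I(\Phi)>-\infty$; lower semicontinuity would give the useless inequality.) The genuine gap is in the conclusion. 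By Theorem \ref{d1 convergence} and Theorem \ref{d1 L1}, what must be shown is $\int_X|\Phi_j-\Phi|\,\omega_{\Phi_j}^n\to0$, and your tightness condition $\sup_j\int_{\{\Phi_j<-A\}}(-\Phi_j)(\omega_{\Phi_j}^n+\omega^n)\to0$ only controls the deep sublevel part of this integral. On the complementary region you must still integrate $|\Phi_j-\Phi|$ against the \emph{varying} measures $\omega_{\Phi_j}^n$, and $L^1(\omega^n)$-convergence says nothing about that: these measures are controlled only through their entropy relative to $d\mu_\gamma$, which blows up along $D$, so they may concentrate near $D$ exactly where $|\Phi_j-\Phi|$ need not be small. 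There is also no quotable ``tightness plus $L^1$ implies $d_1$'' theorem to invoke; the available characterization is $L^1$-convergence plus $I(\Phi_j)\to I(\Phi)$ (Theorem \ref{d1 convergence}).

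The paper closes this with two ingredients your proposal never uses. First, the Orlicz--H\"older duality for the pair $a(s)=(s+1)\log(s+1)-s$ and $b=a^*$: the entropy bound gives $\left\Vert r^{-1}\omega_{\Phi_j}^n/d\mu_\gamma\right\Vert_{L^a(d\mu_\gamma)}\le K_r$, hence $\int_X|\Phi_j-\Phi|\,\omega_{\Phi_j}^n\le 2K_r\left\Vert r|\Phi_j-\Phi|\right\Vert_{L^b(d\mu_\gamma)}$, which converts the problem into integrals against the \emph{fixed} singular measure $d\mu_\gamma$. Second, $\int_X b(r|\Phi_j-\Phi|)\,d\mu_\gamma$ is bounded by $r^2\Vert\Phi_j-\Phi\Vert_{L^6(d\mu_\gamma)}^2\Vert e^{-r\Phi_j}\Vert_{L^3(d\mu_\gamma)}\Vert e^{-r\Phi}\Vert_{L^3(d\mu_\gamma)}$, and the factor $\Vert\Phi_j-\Phi\Vert_{L^6(d\mu_\gamma)}\to0$ is supplied by Theorem \ref{local Lp}, the local $L^p(d\mu_\gamma)$-convergence result that Section 3 of the paper exists to establish. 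Without these steps (or equivalents), ``the standard truncation scheme goes through'' does not: the standard scheme relies on $L^p(\omega^n)$-density of the reference measure, which $d\mu_\gamma$ fails for every $p>1$.
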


\begin{rem}
By Theorem 2.8 in \cite{BDL1}, the entropy compactness theorem holds for a probability measure $\mu = f \omega^n$ with $f \in L^p (\omega^n)$, i.e., if $| \sup_X \Phi_k | < C$ and ${\rm Ent }_{d\mu} (V^{-1}\omega_{\Phi_k}^{n}) < C$ for all $k$, there exists a $d_1$-convergent subsequence.
In general, if we only assume the existence of a uniform upper bound of $| \sup_X \Phi_k | $ and ${\rm Ent }_{d\mu_0}(V^{-1}\omega_{\Phi_k}^{n})$, then there exists a sequence in $\mathcal{E}^1 (X ,\omega)$ which does not have $d_1$-convergent subsequence (see Example \ref{diverge}).
\end{rem}

In order to prove Theorem \ref{ent cpt}, we need a certain type of local $L^p$-convergence results with respect to the singular measure $d \mu_\gamma$ (see Section 3 in this paper).
By applying Theorem 2 of Di Nezza-Lu \cite{DL}, we have the compactness result for a family in $\mathcal{E}^1 (X , \omega)$ whose {\ma} measure has at most Poincar\'{e} type singularities.

\begin{cor}
\label{DNL}
We take $\{ \Phi_j \}_j \subset \mathcal{E}^1 (X,\omega)$.
Assume that there exists $K > 0$ such that $d_1(0, \Phi_j) <K$  for all $j$.
If $\mathcal{S}(B,1)$-condition holds for some $B>0$ in the sense of \cite{DL} , i.e.,
$$
 \omega_{\Phi_j}^{n}  \leq \frac{B}{ \Vert s_D \Vert^{2}_{h} (\log \Vert s_D \Vert^{-2}_{h})^{2}} \omega^n
$$
for all $j$, then there exists a $d_1$-convergent subsequence of $\{ \Phi_j \}_j$.
\end{cor}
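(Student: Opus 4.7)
The plan is to derive Corollary \ref{DNL} by reducing it to Theorem \ref{ent cpt} applied with $\gamma = 0$. Since the hypothesis $d_1(0, \Phi_j) < K$ is already assumed, what remains is to verify from the $\mathcal{S}(B,1)$-condition (and from Di Nezza--Lu's work) the two conditions used in Theorem \ref{ent cpt}: (a) the uniform entropy bound ${\rm Ent}_{d\mu_0}(V^{-1}\omega_{\Phi_j}^n) < K'$, and (b) the uniform log-log threshold bound $\nu_D(\{\Phi_j\}_j) < \infty$.

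Condition (a) will follow directly from the $\mathcal{S}(B,1)$-condition. Recall that
$$d\mu_0 = C_0 \cdot \frac{1}{\Vert s_D \Vert_h^2 (\log \Vert s_D \Vert_h^{-2})^2}\, \omega^n$$
for the normalizing constant $C_0 > 0$. The hypothesis then reads $\omega_{\Phi_j}^n \leq B' d\mu_0$ uniformly in $j$, for some $B'$ depending only on $B, C_0$. Thus the Radon--Nikodym density $f_j := d(V^{-1}\omega_{\Phi_j}^n)/d\mu_0$ is bounded above by $V^{-1}B'$. Since $d\mu_0$ is a probability measure and $x\log x \geq -1/e$ for $x \geq 0$, the integral $\int_X f_j \log f_j \, d\mu_0$ lies in the bounded interval $[-1/e, \log(V^{-1}B')]$; multiplying by $V$ yields the desired uniform entropy bound.

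Condition (b) is the main obstacle and is where Di Nezza--Lu's Theorem 2 in \cite{DL} enters. The point is to bridge the measure-theoretic bound on $\omega_{\Phi_j}^n$ with a pointwise lower bound on $\Phi_j$ near $D$ of the form $\Phi_j \geq c_0 \Psi + \varphi_j$, with $c_0$ uniform in $j$ and $\varphi_j|_D \in {\rm PSH}_0(D, \omega|_D)$. Under $\mathcal{S}(B,1)$ combined with the $d_1$-bound, Di Nezza--Lu's result shows that the $\Phi_j$ have Poincar\'{e} type asymptotics in a controlled manner, which gives precisely the comparison $\Phi_j \geq c_0 \Psi + \varphi_j$ on some fixed neighborhood $V$ of $D$. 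This yields the uniform bound $\nu_D(\{\Phi_j\}_j) \leq c_0 < \infty$.

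Once (a) and (b) are established, Theorem \ref{ent cpt} applied with $\gamma = 0$ immediately supplies a $d_1$-convergent subsequence of $\{\Phi_j\}$, completing the proof. The delicate step is extracting the uniform pointwise comparison in (b) from a merely integral (Monge--Amp\`{e}re) upper bound; essentially all the real work is encapsulated in Di Nezza--Lu's theorem, and our contribution is to package their output into the log-log threshold language used in Theorem \ref{ent cpt}.
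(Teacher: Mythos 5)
Your proposal is correct and follows essentially the same route as the paper: both derive the uniform entropy bound directly from the $\mathcal{S}(B,1)$ measure inequality, both obtain the uniform log-log threshold from Di Nezza--Lu's Theorem 2 (the paper cites part (b), which gives the pointwise bound $\Phi_j \geq A_1\Psi + A_2$ with $A_1, A_2$ depending only on $B$, and then absorbs the constant $A_2$ into a small multiple of $\Psi$ near $D$), and both conclude by applying Theorem \ref{ent cpt} with $\gamma=0$.
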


\begin{rem}
If we consider the relative entropy with respect to the Lebesgue measure $d \lambda$ (or the measure defined by a {\kah} metric with conical singularities along $D$), the direct computation tells us that ${\rm Ent}_{d \lambda} (d \mu_0) = + \infty$, i.e., potential functions of Poincar\'{e} type {\kah} metrics do not satisfy the assumption of the entropy finiteness.
So, the compactness results for measures with $L^p$-density (\cite[Theorem 2.17]{BBEGZ}, \cite[Theorem 4.44]{Da} and \cite[Lemma 6.3]{Zh}) cannot be applied to Poincare type {\kah} metrics directly.
On the other hand, by considering the relative entropy with respect to the measure $d\mu_\gamma$, we have ${\rm Ent}_{d \mu_\gamma} (V^{-1} \omega_\Phi^n) < + \infty$ for a Poincar\'{e} type {\kah} metric $\omega_\Phi$ and we can show the similar compactness result under the assumption of uniform log-log threshold.
\end{rem}

At the end of Introduction, we mention the following natural problem.
\begin{problem}
Can we characterize the existence of constant scalar curvature {\kah} metrics of Poincar\'{e} type by certain variational properties of the (log) Mabuchi K-energy?
\end{problem}


This paper is organized as follows.
In Section 2, we prove Theorem \ref{L1}.
The Ohsawa-Takegoshi $L^2$-extension theorem and the integrability theorem of Skoda and Zeriahi (Theorem \ref{SZ}) play an important role in this proof.
In Section 3, we prove that if a sequence of plurisubharmonic functions converges in $L^1(d\lambda)$-topology, it also converges in $L^{p}_{loc} (d\mu_\gamma)$-topology for any $p\geq 1$. 
This is an analogue of the $L^{p}_{loc}(d\lambda)$-convergence results of plurisubharmonic functions with respect to the Lebesgue measure $d \lambda$.
In Section 4, we introduce the log-log threshold for a plurisubharmonic function which is possibly unbounded.
We also prove Theorem \ref{alpha invariant}, i.e., the positivity of the integrability threshold for a family with unform log-log threshold.
In Section 5, we prove Theorem \ref{ent cpt} and Corollary \ref{DNL} by applying Theorem \ref{alpha invariant}.

\begin{ack}
The author would like to thank Natsuo Miyatake for pointing out many typos in the draft of this paper.
He would like to thank Eiji Inoue and Ryoichi Kobayashi for many helpful comments and discussions, and he thanks Yoshinori Hashimoto  for discussions and helping him translate the title and abstract into French.
He also would like to thank Tomoyuki Hisamoto for letting him know that Proposition 2.3 in this paper can be proved by \cite{MV,GuanZhou}.
He is grateful to the anonymous referee for pointing out to improve the previous version of this paper.
This work was supported by JSPS KAKENHI (the Grant-in-Aid for Research Activity Start-up) Grant Number JP23K19020.
\end{ack}


\section{Skoda-Zeriahi type integrability}

Let $\Omega \subset \mathbb{C}^n$ be a bounded pseudoconvex domain with $|z_1| <e^{-1}$.
We consider the hyperplane $\Omega^\prime:= \Omega \cap \{ z_1=0 \}$.
The symbols $d\lambda$ and $d \lambda^\prime$ denote the Lebesgue measures on $\Omega$ and $\Omega^\prime$, respectively.
In this section, we prove Theorem \ref{L1} by applying the (modified) Ohsawa-Takegoshi $L^2$-extension theorem.
More precisely, we consider an $L^2$-extension of the constant function $1$ on $\Omega^\prime$.
We follow B.Y. Chen's simplified proof \cite{Ch} of the Ohsawa-Takegoshi $L^2$-extension theorem and we will modify the weight function.

We will use the following elementary lemma throughout this paper.
\begin{lemma}
For $\gamma \in \mathbb{R}$, we consider the following integral
$$
S_\gamma := \int_{\Omega} \frac{d \lambda  }{|z_1|^2 (\log |z_1|^{-2})^{2 - \gamma}}.
$$
If $\gamma < 1$ then $S_\gamma$ is finite, otherwise $S_\gamma = + \infty$.
\end{lemma}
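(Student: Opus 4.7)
The plan is to reduce to a one-variable calculation by Fubini, then use polar coordinates in the $z_1$-slice and the substitution $u = \log|z_1|^{-2}$.

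First, since $\Omega$ is bounded and $|z_1|<e^{-1}$ on $\Omega$, there is a constant $R>0$ and a bounded open set $\Omega''\subset\mathbb{C}^{n-1}$ such that for every $z'=(z_2,\dots,z_n)$ in the projection of $\Omega$ to the last $n-1$ coordinates, the slice $\Omega_{z'}:=\{z_1\in\mathbb{C}\,:\,(z_1,z')\in\Omega\}$ is contained in the punctured disk $\{0<|z_1|<e^{-1}\}$. By Fubini's theorem,
\begin{equation*}
S_\gamma \;=\; \int_{\Omega''}\!\left(\int_{\Omega_{z'}} \frac{dA(z_1)}{|z_1|^2(\log|z_1|^{-2})^{2-\gamma}}\right) d\lambda'(z'),
\end{equation*}
so it suffices to analyze the inner one-variable integral uniformly in $z'$.

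For finiteness when $\gamma<1$, I would estimate the inner integral from above by extending the domain to the full disk $\{|z_1|<e^{-1}\}$, pass to polar coordinates $z_1=re^{i\theta}$, and then substitute $u=\log r^{-2}$ (so $du = -2\,dr/r$). This turns the inner integral into $\pi\int_2^{\infty} u^{\gamma-2}\,du$, which is finite precisely when $\gamma-2<-1$, i.e.\ $\gamma<1$, with value $\pi\cdot 2^{\gamma-1}/(1-\gamma)$. Multiplying by $\lambda'(\Omega'')<\infty$ gives $S_\gamma<\infty$.

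For the divergence when $\gamma\geq1$, I would argue that because the statement is used in a setting where $\Omega'=\Omega\cap\{z_1=0\}$ is nonempty, we can find a point $z'_0\in\Omega'$ and radii $0<\delta,\rho$ such that the polydisk-type region $\{|z_1|<\delta\}\times B(z'_0,\rho)$ is contained in $\Omega$. Restricting the integral to this set and applying the same polar/substitution computation produces the integral $\pi\cdot\lambda'(B(z'_0,\rho))\int_{-2\log\delta}^{\infty} u^{\gamma-2}\,du$, which diverges for $\gamma\geq 1$. The main (very mild) obstacle is just being careful about the shape of $\Omega$ near $\{z_1=0\}$; everything else is a direct one-variable computation.
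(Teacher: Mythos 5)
Your proposal is correct and follows essentially the same route as the paper: reduce to a one-variable integral over a disk via Fubini, pass to polar coordinates, and analyze the resulting integral $\int r^{-1}(-2\log r)^{\gamma-2}\,dr$ near $r=0$ (the paper writes down the primitive directly rather than substituting $u=\log r^{-2}$, but this is cosmetic). Your extra care in the divergence case about locating a polydisk $\{|z_1|<\delta\}\times B(z_0',\rho)\subset\Omega$ is a reasonable tightening of the paper's terse reduction, not a different method.
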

\begin{proof}
It is enough to compute $S_\gamma$ when $\Omega = B := \{ z \in  \mathbb{C} \,\, |\,\, |z|< e^{-1} \} $ by the Fubini theorem.
By taking the polar coordinates $z = r e^{i \theta}$, we have
$$
\frac{d \lambda  }{|z_1|^2 (\log |z_1|^{-2})^{2 - \gamma}} = \frac{d r d \theta }{r (-2 \log r )^{2 - \gamma}}.
$$
The primitive function of $r^{-1} (-2 \log r )^{ \gamma - 2}$ is $ (2 - 2\gamma )^{-1}(-2 \log r )^{ \gamma - 1}$ if $\gamma \neq 1$ or $-2^{-1}\log (- \log r) $ if $\gamma = 1$.
By taking $r \to 0$, it follows that $S_\gamma < \infty$ only if $\gamma < 1$.
\end{proof}

We recall the Ohsawa-Takegoshi $L^2$-extension theorem.
\begin{thm}[\cite{OT}, see the simplified proof of \cite{Ch}]
Let $\varphi$ be a plurisubharmonic function on $\Omega$.
For any holomorphic function $f$ on $\Omega^\prime = \{z_1 = 0\} \cap \Omega$ with $\int_{\Omega^\prime} |f|^2 e^{-\varphi} d \lambda^\prime < \infty$, there exists a holomorphic function $F$ on $\Omega$ such that $F |_{\Omega^\prime} = f$ and
\begin{equation}
\label{OT}
\int_{\Omega} \frac{|F|^2 e^{-\varphi}}{|z_1|^2 (\log |z_1|^{-2})^2} d \lambda  \leq C \int_{\Omega^\prime} |f|^2 e^{-\varphi} d \lambda^\prime.
\end{equation}
Here, the constant $C>0$ only depends on $n$.
\end{thm}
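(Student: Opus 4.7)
The plan is to follow B.Y.\ Chen's simplified approach to the Ohsawa--Takegoshi theorem, adapted to produce the extra logarithmic factor $(\log|z_1|^{-2})^{-2}$ in the weight. The overall strategy is: construct a rough smooth extension $\widetilde{F}$ of $f$, correct it by solving a $\bar\partial$-equation with a singular, twisted $L^2$-weight, and take a limit as the cutoff concentrates on $\Omega^\prime$. The ``modification'' of the weight alluded to in the author's text should amount to replacing the standard OT weight $|z_1|^{-2}$ by one coming from the plurisubharmonic function
\begin{equation*}
\eta(z_1) \;:=\; -\log \log|z_1|^{-2},
\end{equation*}
which is well-defined on $\Omega$ (since $|z_1|<e^{-1}$) and satisfies, by direct computation,
\begin{equation*}
i\,\partial\bar\partial\, \eta \;=\; \frac{i\, dz_1 \wedge d\bar z_1}{|z_1|^2 (\log|z_1|^{-2})^2}.
\end{equation*}
The coefficient on the right is exactly the singular density appearing in~(\ref{OT}), which suggests that $\eta$ is the correct twisting function to produce this weight.

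First I would set up an approximate extension. Fix a smooth cutoff $\chi_\epsilon:\mathbb{R}_{\geq 0}\to [0,1]$ with $\chi_\epsilon \equiv 1$ near $0$ and supported in $[0,\epsilon)$, and define $\widetilde{F}_\epsilon(z) := \chi_\epsilon(|z_1|^2)\, \tilde f(z')$, where $\tilde f$ is obtained by composing $f$ with the projection $(z_1,z')\mapsto z'$. Then $\widetilde{F}_\epsilon|_{\Omega^\prime}=f$ and $\bar\partial \widetilde{F}_\epsilon = \chi_\epsilon'(|z_1|^2)\,\bar z_1\, \tilde f\, d\bar z_1$ is supported in a thin annular region around $\Omega^\prime$. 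Next I would apply H\"ormander's $L^2$ estimate with the weight $\varphi$ twisted by $\eta$: one seeks $u_\epsilon$ with $\bar\partial u_\epsilon = \bar\partial \widetilde{F}_\epsilon$ and
\begin{equation*}
\int_\Omega \frac{|u_\epsilon|^2 e^{-\varphi}}{|z_1|^2(\log|z_1|^{-2})^2}\, d\lambda \;\leq\; C\int_\Omega |\bar\partial \widetilde{F}_\epsilon|^2_{i\partial\bar\partial \eta}\, e^{-\varphi}\, d\lambda.
\end{equation*}
The positivity of $i\partial\bar\partial\eta$ in the $dz_1\wedge d\bar z_1$ direction cancels exactly the $|\chi_\epsilon'(|z_1|^2)\bar z_1|^2$ concentration on the right, so a Fubini-type computation in the $z_1$-variable gives, as $\epsilon\to 0$,
\begin{equation*}
\int_\Omega |\bar\partial \widetilde{F}_\epsilon|^2_{i\partial\bar\partial \eta}\, e^{-\varphi}\, d\lambda \;\longrightarrow\; C_n \int_{\Omega^\prime} |f|^2 e^{-\varphi}\, d\lambda^\prime.
\end{equation*}
Setting $F_\epsilon := \widetilde{F}_\epsilon - u_\epsilon$ gives a holomorphic $F_\epsilon$ with $F_\epsilon|_{\Omega^\prime}=f$ (because the singular weight $|z_1|^{-2}(\log|z_1|^{-2})^{-2}$ is non-integrable on $\Omega^\prime$, forcing the trace of $u_\epsilon$ there to vanish), and a weak limit along a subsequence yields the desired extension $F$.

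The main obstacle I expect is making the twisted $\bar\partial$-estimate work globally on the pseudoconvex domain $\Omega$ with a weight that is singular precisely on $\Omega^\prime$: one must verify the relevant twisted Bochner--Kodaira inequality (in the form used by Chen, with an auxiliary positive function such as $\tau = -\eta + A$ for some constant $A$) and handle the non-smoothness of $\eta$ along $z_1=0$ by an approximation argument, e.g.\ replacing $\log|z_1|^{-2}$ by $\log(|z_1|^2+\delta)^{-1}$ and letting $\delta\to 0$ after the estimate. A secondary technical issue is the extraction of a limit of the $F_\epsilon$ (or alternatively running the estimate directly with a weight that already encodes $\epsilon$), and ensuring the constant $C$ depends only on the dimension $n$ and not on $\Omega$ or $\varphi$; this is standard if one normalizes so that the estimate only sees the positivity of $i\partial\bar\partial\eta$ near $\Omega^\prime$.
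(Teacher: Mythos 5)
Your overall strategy --- rough extension $f\chi(|z_1|^2/\epsilon^2)$, a $\bar\partial$-correction measured against a singular twisted weight, and a limit as $\epsilon\to 0$ with the correction vanishing on $\Omega'$ --- is exactly the route of B.Y.~Chen's proof that the paper follows (and adapts in Proposition \ref{main prop}, where Chen's weight $\eta_0=-\rho+\log(-\rho)$, $\rho=\log(|z_1|^2+\epsilon^2)$, is replaced by $\eta_\gamma=-\rho+(-\rho)^\gamma$). However, two of your specific steps do not work as written. First, the weight: applying H\"ormander with weight $\varphi+\eta$, where $\eta=-\log\log|z_1|^{-2}$, produces the density $e^{-\varphi-\eta}=e^{-\varphi}\log|z_1|^{-2}$ on the left-hand side, which falls short of the required $e^{-\varphi}|z_1|^{-2}(\log|z_1|^{-2})^{-2}$ by a factor of $|z_1|^{-2}(\log|z_1|^{-2})^{-3}$. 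In the actual argument the factor $|z_1|^{-2}$ comes from adding $\log|z_1|^2$ to the weight (the function $\phi=\varphi+\log|z_1|^2$ in the paper), one power of $(\log|z_1|^{-2})^{-1}$ comes from the substitution $u_\epsilon\mapsto u_\epsilon e^{\psi}$ with $\psi=-\log\eta$, and the remaining power is the genuine Ohsawa--Takegoshi gain: the coefficient $1-\bigl(\tfrac{\gamma(1-\gamma)\eta}{4(-\rho)^{2-\gamma}}+1\bigr)^{-1}-4r/\eta$ on the left of (\ref{BYC4}) is bounded below only by a multiple of $\eta^{\gamma-1}$, not by a constant. None of this is delivered by a plain H\"ormander estimate with metric $i\partial\bar\partial\eta$; your identity $i\partial\bar\partial\eta=\frac{i\,dz_1\wedge d\bar z_1}{|z_1|^2(\log|z_1|^{-2})^2}$ is the right heuristic for why the Poincar\'e density appears, but it is not the mechanism.

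Second, your reason for $F_\epsilon|_{\Omega'}=f$ is based on a false premise: the density $|z_1|^{-2}(\log|z_1|^{-2})^{-2}$ \emph{is} integrable across $\{z_1=0\}$ (this is precisely Lemma 2.1 of the paper with $\gamma=0$; it is why the theorem is useful at all), so finiteness of the weighted norm of $u_\epsilon$ against it does not force the trace of $u_\epsilon$ on $\Omega'$ to vanish. What does force the vanishing is the factor $|z_1|^{-2}$ alone, i.e.\ once again the term $\log|z_1|^2$ in the weight: $u_\epsilon$ is holomorphic near $\Omega'$ (since $\bar\partial u_\epsilon$ is supported in $\{\epsilon^2/2\le|z_1|^2\le\epsilon^2\}$), and a holomorphic function $u$ with $\int|u|^2|z_1|^{-2}\,d\lambda<\infty$ must vanish on $\{z_1=0\}$. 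So the ingredient you omitted is needed twice: once to produce the stated density and once to guarantee the trace condition. You do flag the twisted Bochner--Kodaira inequality as the main obstacle, which is the right instinct, but as it stands the central inequality is asserted in a form that H\"ormander's theorem does not give, and the proposal would need to be rebuilt around the weight $\phi=\varphi+\log|z_1|^2$ and the auxiliary function $\psi=-\log\eta_0$ to close these gaps.
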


Note that the measure $|z_1|^{-2} (\log |z_1|^{-2})^{-2} d \lambda$ in the integral in the left hand side of (\ref{OT}) is singular along $\Omega^\prime$ and non-pluripolar, i.e., it does not charge mass on $\Omega^\prime$.
Firstly, we extend the above result for the measure with $L^1 (d\lambda)$-density which is more singular than the  measure in (\ref{OT}).

\begin{prop}{\rm (\cite[Theorem 1.2]{MV}, \cite[Theorem 3.19]{GuanZhou} )}
\label{main prop}
Fix $\gamma \in (0,1)$.
Let $\varphi$ be a plurisubharmonic function on $\Omega$.
For any holomorphic function $f$ on $\Omega^\prime = \{z_1 = 0\} \cap \Omega$ with $\int_{\Omega^\prime} |f|^2 e^{-\varphi} d \lambda^\prime < \infty$, there exists a holomorphic function $F$ on $\Omega$ such that $F |_{\Omega^\prime} = f$ and
$$
\int_{\Omega} \frac{|F|^2 e^{-\varphi}}{|z_1|^2 (\log |z_1|^{-2})^{2 - \gamma}} d \lambda  \leq C \int_{\Omega^\prime} |f|^2 e^{-\varphi} d \lambda^\prime.
$$
Here, the constant $C>0$ only depends on $n$ and $\gamma$.
\end{prop}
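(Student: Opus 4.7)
The plan is to follow the simplified proof of the Ohsawa-Takegoshi $L^2$-extension theorem due to B.Y. Chen essentially verbatim, with the only change being the auxiliary plurisubharmonic weight that produces the exponent of $\log|z_1|^{-2}$ in the denominator. By the standard two-step approximation (decreasing smooth approximations of $\varphi$ on a neighborhood of $\overline{\Omega}$, combined with an exhaustion of $\Omega$ by smoothly bounded strongly pseudoconvex subdomains) one reduces to the situation where $\varphi$ is smooth and $\Omega$ has smooth boundary; the general statement is recovered at the end by Fatou's lemma and weak $L^{2}_{\mathrm{loc}}$-compactness.

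In Chen's proof of the classical case $\gamma=0$, one introduces (after a standard regularization replacing $|z_1|^2$ by $|z_1|^2+\epsilon$) the auxiliary function $\eta=-\log(-\log|z_1|^2)$ and applies the twisted Bochner-Kodaira / H\"{o}rmander $L^2$-estimate on $\Omega$ with weight $\varphi+\eta$. After building a smooth extension of $f$ using a cutoff $\chi_\epsilon$ in the $z_1$-direction, correcting by a $\bar\partial$-solution, and passing to the limit $\epsilon\to 0$, the $|\bar\partial\chi_\epsilon|^2$-term produces the trace $\int_{\Omega^\prime}|f|^2 e^{-\varphi}\,d\lambda^\prime$ together with the factor $|z_1|^{-2}$, while $e^{-\eta}$ contributes the $(\log|z_1|^{-2})^{-2}$. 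To obtain the exponent $2-\gamma$ instead of $2$ in the present statement, I would simply replace $\eta$ by
\[
\eta_\gamma := -(2-\gamma)\log(-\log|z_1|^2),
\]
which is plurisubharmonic on $\Omega$ (since the hypothesis $|z_1|<e^{-1}$ ensures $-\log|z_1|^2>2$, and $\log|z_1|^2$ is pluriharmonic off $\{z_1=0\}$), and for which $e^{-\eta_\gamma}$ yields exactly the desired factor $(\log|z_1|^{-2})^{-(2-\gamma)}$.

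The main technical obstacle will be checking that the twisted Bochner-Kodaira estimate still closes with this modified weight. The curvature of $\eta_\gamma$ equals
\[
(2-\gamma)\,\frac{\sqrt{-1}\,\partial\log|z_1|^2\wedge\bar\partial\log|z_1|^2}{(\log|z_1|^2)^2},
\]
which is positive but degenerate along the directions tangential to $\Omega^\prime$; an auxiliary twist $s=s(\eta_\gamma)$ has to be chosen so that its $|\bar\partial s|^2$-contribution is absorbed by this degenerate curvature with a $\gamma$-dependent constant. This is precisely where the restriction $\gamma<1$ enters quantitatively, and one should obtain a constant $C=C(n,\gamma)$ that is finite for each $\gamma\in(0,1)$ but blows up as $\gamma\to 1^{-}$, in keeping with the fact that $|z_1|^{-2}(\log|z_1|^{-2})^{\gamma-2}$ ceases to be locally integrable at $\gamma=1$. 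Once the a priori estimate is established for smooth $(\Omega,\varphi)$, the standard weak compactness argument delivers the required holomorphic extension $F$.
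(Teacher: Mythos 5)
Your overall strategy (follow B.Y.\ Chen's simplified proof, modify the auxiliary function, close the twisted estimate with a $\gamma$-dependent constant) is the right one in spirit, but the specific modification you propose does not work, and the failure is not cosmetic. First, there is a sign error: for $\eta_\gamma=-(2-\gamma)\log(-\log|z_1|^2)$ one has $e^{-\eta_\gamma}=(\log|z_1|^{-2})^{2-\gamma}$, so the log factor lands in the \emph{numerator}, not the denominator; the weight $e^{-\varphi-\eta_\gamma-\log|z_1|^2}$ is then far more singular than the target density and the corresponding estimate is false (already for $f\equiv1$, $\varphi\equiv0$ the left-hand side diverges since $\int|z_1|^{-2}\,d\lambda=\infty$). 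The function you would actually need, $u=(2-\gamma)\log\log|z_1|^{-2}$ with $e^{-u}=(\log|z_1|^{-2})^{-(2-\gamma)}$, is plurisuperharmonic off $\{z_1=0\}$ (with $v=-\log|z_1|^2$ one has $\dol\log v=-\,\sqrt{-1}\,\partial v\wedge\overline{\partial}v/v^{2}\le0$ since $v$ is pluriharmonic there), so it cannot be absorbed into the plurisubharmonic weight at all. Second, and more fundamentally, no choice of additional psh weight can yield this proposition: any function whose exponential is a nontrivial power of $\log|z_1|^{-2}$ is singular exactly along $\Omega^\prime$, so it either annihilates or blows up the right-hand side $\int_{\Omega^\prime}|f|^2e^{-\varphi-\eta_\gamma}\,d\lambda^\prime$; equivalently, the limit of the $|\overline{\partial}\chi_\epsilon|^2$-term over $\{\epsilon^2/2\le|z_1|^2\le\epsilon^2\}$ acquires an extra factor $(\log\epsilon^{-2})^{\pm(2-\gamma)}$ and no longer converges to the unweighted trace integral.

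The gain must therefore come from the twisted a priori estimate itself, which is what the paper (following McNeal--Varolin and Guan--Zhou) does: keep $\phi=\varphi+\log|z_1|^2$ and $\psi=-\log\eta$ essentially as in Chen, but replace Chen's denominator function $\eta_0=-\rho+\log(-\rho)$ by $\eta_\gamma=-\rho+(-\rho)^{\gamma}$, where $\rho=\log(|z_1|^2+\epsilon^2)$. The concavity defect of $(-\rho)^{\gamma}$ contributes the term $\gamma(1-\gamma)\,\sqrt{-1}\,\partial\rho\wedge\overline{\partial}\rho/\bigl(\eta(-\rho)^{2-\gamma}\bigr)$ to $\dol(\psi+\phi)$, which leaves a surplus $1-|\overline{\partial}\psi|^2_{\dol(\psi+\phi)}\approx\eta^{\gamma-1}$ on the left-hand side; multiplied against $e^{\psi-\phi}=e^{-\varphi}/(\eta|z_1|^2)$ this produces exactly the density $e^{-\varphi}/(|z_1|^2\eta^{2-\gamma})$, while the right-hand side is unchanged and still tends to $C\int_{\Omega^\prime}|f|^2e^{-\varphi}\,d\lambda^\prime$. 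This is where the hypothesis $\gamma\in(0,1)$ enters (through $\gamma(1-\gamma)>0$) and where the constant $C\propto\gamma^{-1}(1-\gamma)^{-1}$ comes from. You correctly anticipate that $\gamma<1$ must appear in the curvature computation, but as written the key step of your plan extracts the gain from a weight that is either of the wrong sign or not plurisubharmonic, so the argument does not close.
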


\begin{rem}
Indeed, for $\gamma \in (0,1)$, $\frac{  d \lambda }{|z_1|^2 (\log |z_1|^{-2})^{2 - \gamma}} \gg \frac{  d \lambda }{|z_1|^2(\log |z_1|^{-2})^{2 }} $ near $\Omega^\prime$.
\end{rem}

\begin{rem}
Proposition \ref{main prop} has already been proved by \cite{GuanZhou,MV} in a more general framework (see \cite[Definition 1.1 and p.706]{MV}, \cite[Definition 3.18]{GuanZhou}).
By considering a small modification of the weight function in the simplified proof by B.Y.Chen \cite{Ch}, we obtain a short proof of Proposition \ref{main prop} which is based on complex analysis.
(In the proof of \cite{Ch}, the weight function is defined by $\eta_0:= -\rho + \log (-\rho)$ for $\rho < 0$.
Compare $\eta_0$ with the weight function denoted by $\eta = \eta_\gamma$ below.)
The outline of the proof of Proposition \ref{main prop} is similar to B.Y. Chen's proof, but we give a self-contained proof of Proposition \ref{main prop} for reader's convenience.
\end{rem}

{\it Proof of Proposition \ref{main prop}.}
We can take an increasing sequence $\{ \Omega_j \}_j$ where $\Omega_j$ is a bounded smooth pseudoconvex domains such that $\Omega = \cup_j \Omega_j$ and $\Omega_j \Subset \Omega_{j+1}$.
In addition, we can take a smooth approximation $\varphi_k \in {\rm PSH} (\Omega_k) \cap C^\infty (\Omega_k) $ such that $\varphi_k \downarrow \varphi$ on each $\Omega_j$.
So, we may assume that a holomorphic function $f$ can be extended to some neighborhood of $\Omega_j \cap \{z_1 = 0\}$.
In addition, by taking the limit as $k,j \to \infty$, it suffices that we only consider the case when $\varphi$ is smooth and strictly plurisubharmonic on the closure $\overline{\Omega}$.

Fix $\gamma \in (0,1)$.
For sufficiently small $\epsilon >0 $, we set following functions:
$$
\rho:= \log (|z_1|^2 + \epsilon^2),  \,\,\,\, \eta = \eta_\gamma:= - \rho +  (- \rho)^\gamma.
$$
We may assume that $\rho <-1$.
In addition, we set following plurisubharmonic functions:
$$
\phi:=  \varphi + \log |z_1|^2, \,\,\,\, \psi:= - \log \eta.
$$
Let $\chi: \mathbb{R}_{\geq 0} \to \mathbb{R}_{\geq 0}$ be a smooth function such that $\chi \equiv 1 $ on $[0,1/2]$ and $\chi \equiv 0$ on $[1,\infty)$ and $| \dot{\chi}| \leq 3$.
For a holomorphic function $f$ on $\Omega^\prime$, we set
$
f_\epsilon = f \chi (|z_1|^2 / \epsilon^2 ).
$

We consider the $\overline{\partial}$-closed $(0,1)$-form $v_\epsilon:= \overline{\partial} f_\epsilon$.
Since $f$ is holomorphic, we have ${\rm supp}\, v_\epsilon \subset \{ \epsilon^2/2 \leq |z_1|^2 \leq \epsilon^2 \}$.
We can find the minimum solution $u_\epsilon$ such that $
v_\epsilon:= \overline{\partial} u_\epsilon$ and $u_\epsilon \in ({\rm Ker }\overline{\partial})^\perp \subset L^2 (\Omega , e^{- \phi}d \lambda).$
For $\epsilon >0$, the function $\psi$ is bounded, so we have $u_\epsilon e^{\psi} \in ({\rm Ker }\overline{\partial})^\perp \subset L^2 (\Omega , e^{-\psi -\phi} d\lambda ).$
For any $r>0$, the H\"{o}rmander's $L^2$-estimate \cite{Ho2} implies that we can compute as follows.
\begin{eqnarray}
\int_{\Omega} |u_\epsilon |^2 e^{ \psi -\phi}d \lambda \nonumber
&=&  \int_{\Omega} |u_\epsilon  e^{\psi} |^2 e^{-\psi -\phi} d \lambda \\ \nonumber
&\leq& \int_{\Omega} |\overline{\partial} (u_\epsilon e^{\psi}) |_{\dol (\psi + \phi)}^{2} e^{-\psi-\phi} d \lambda \\ \nonumber
&=& \int_{\Omega} |\overline{\partial} u_\epsilon   +  u_\epsilon \overline{\partial} \psi |_{\dol (\psi + \phi) }^{2} e^{\psi-\phi} d \lambda \\ \nonumber
&\leq& (1+r^{-1})  \int_{\Omega} |v_\epsilon |_{\dol (\psi + \phi) }^{2}  e^{\psi-\phi}  d \lambda   +   \int_{\Omega} | u_\epsilon |^2  | \overline{\partial} \psi |_{\dol (\psi + \phi) }^{2} e^{\psi-\phi} d \lambda \\
&& \label{BYC1} \,\,\,\,\, +  r \int_{{\rm supp} v_\epsilon}  | u_\epsilon |^2 |\overline{\partial} \psi |_{\dol (\psi + \phi) }^{2}  e^{\psi-\phi} d \lambda . \
\end{eqnarray}
Here, we have used the Schwarz's inequality.
Note that the term $|\overline{\partial} (u_\epsilon e^{\psi}) |_{\dol (\psi + \phi)}$ makes sense for the plurisubharmonic function $\psi + \phi$ (see \cite{Bl}).
Since $\phi$ is plurisubharmonic, by the direct computation, we obtain
\begin{eqnarray*}
 \dol (\psi + \phi) & \geq & - \frac{ \dol \eta}{\eta} + \frac{\sqrt{-1} \partial \eta \wedge\overline{\partial} \eta}{\eta^{2}}\\
&=&  \left( 1 + \gamma (-\rho)^{\gamma-1}  \right) \frac{ \dol \rho}{\eta}  + \gamma (1-\gamma) \frac{ \sqrt{-1} \partial \rho \wedge\overline{\partial} \rho}{\eta (-\rho)^{2-\gamma} } + \frac{\sqrt{-1} \partial \eta \wedge\overline{\partial} \eta}{\eta^{2}}. 
\end{eqnarray*}
Since $\overline{\partial} \eta  = - \left( 1 + \gamma (-\rho)^{\gamma-1}  \right) \overline{\partial} \rho$, we have
\begin{eqnarray*}
\dol (\psi + \phi) 
&\geq&  \left( \frac{\gamma (1-\gamma)  \eta}{\left( 1 + \gamma (-\rho)^{\gamma-1}  \right)^2 (- \rho)^{2-\gamma}}  +1 \right) \frac{1}{\eta^{2}}  \sqrt{-1} \partial \eta \wedge\overline{\partial} \eta \\
&\geq&  \left( \frac{\gamma (1-\gamma)  \eta}{4 (- \rho)^{2-\gamma}}  +1 \right) \frac{1}{\eta^{2}}  \sqrt{-1} \partial \eta \wedge\overline{\partial} \eta  
\end{eqnarray*}
Here, we have used the inequality $1 + \gamma (-\rho)^{\gamma-1}  \leq 2$.
We can write $\overline{\partial} \psi = - \frac{\overline{\partial} \eta}{\eta} = - \frac{ 1 + \gamma (-\rho)^{\gamma-1}}{\eta} \frac{z_1 d\overline{z}_{1}}{|z_1|^2 + \epsilon^2} .$
Thus, on $\Omega$, we obtain the following estimate:
\begin{eqnarray}
\label{BYC2}
 | \overline{\partial} \psi |_{\dol \phi }^{2} \leq \frac{1}{ \frac{\gamma (1-\gamma)  \eta}{4(- \rho)^{2-\gamma}}  +1  }  <1.
\end{eqnarray}
On the other hand, on ${\rm supp}\, v_\epsilon $, the equality $\dol \rho =  (|z_1|^2 + \epsilon^2 )^{-2} \epsilon^2 dz_1 d\overline{z}_1$  implies that
\begin{equation}
\label{BYC3}
 | \overline{\partial} \psi |_{\dol \phi }^{2} \leq \left( 1 + \gamma (-\rho)^{\gamma-1} \right) \frac{ |z_1|^2}{ \eta \epsilon^2} \leq \frac{4}{\eta}.
\end{equation}
By combining (\ref{BYC1}), (\ref{BYC2}) and (\ref{BYC3}), we obtain the following estimate.
\begin{eqnarray}
\label{BYC4}
\int_{\Omega} \left( 1 - \frac{1}{  \frac{\gamma (1-\gamma)  \eta}{4(- \rho)^{2-\gamma}}  +1  }  -\frac{4r}{\eta} \right) |u_\epsilon |^2 e^{ \psi -\phi} d \lambda \leq  (1+r^{-1})  \int_{\Omega} |v_\epsilon |_{\dol (\psi  + \phi) }^{2}  e^{\psi -\phi}  d \lambda .
\end{eqnarray}
For sufficiently small $\epsilon >0$, we have $\eta \approx -\rho$.
So, by taking sufficiently small $\epsilon > 0$ and $r>0$, we obtain
$$
1 - \frac{1}{  \frac{\gamma (1-\gamma)  \eta}{4(- \rho)^{2-\gamma}}  +1  }  -\frac{4r}{\eta} \approx \frac{1}{\eta^{1-\gamma}}.
$$
Therefore, the left hand side in (\ref{BYC4}) can be bounded from below by $\int_\Omega \eta^{\gamma-1} |u_\epsilon |^2 e^{ \psi -\phi} d \lambda$.

Next, we consider the limit of the right hand side in (\ref{BYC4}) as $\epsilon \to 0$.
Since $f$ is holomorphic, we have $v_\epsilon = \overline{\partial} f_\epsilon = f \chi^\prime (|z_1|^2/\epsilon^2) \frac{z_1 d\overline{z}_1}{\epsilon^2}.$
On ${\rm supp}\, v_\epsilon  \subset \{ \epsilon^2/2 \leq |z_1|^2 \leq \epsilon^2 \}$, we have
$$
|v_\epsilon |_{\dol (\psi  + \phi) }^{2}  e^{\psi -\phi} \leq \frac{|f|^2 |\dot{\chi}|^2 |z_1|^2}{\epsilon^4} \frac{(|z_1|^2 + \epsilon^2)^2 \eta}{\epsilon^2} e^{\psi -\phi} \leq 36 |f|^2 e^{ -\phi}.
$$
Here, we have used the fact that $\psi = - \log \eta$ and $|\dot{\chi}|\leq 3$.
By scaling $\epsilon w:= z_1$, the Fubini theorem implies that
\begin{eqnarray*}
\limsup_{\epsilon \to 0} \int_{\Omega} |v_\epsilon |_{\dol (\psi  + \phi) }^{2}  e^{\psi -\phi}d \lambda \leq   C_1 \int_{\Omega^\prime}|f|^2 e^{-\varphi} d \lambda^\prime
\end{eqnarray*}
as $\epsilon \to 0$.
Here, we set $C_1:= 36 \int_{\{ 1/2 < |w|^2 < 1\}}  |w|^{-2} i dw\wedge d\overline{w}$ and this constant $C_1$ is independent of $f$ and $\varphi$.

We set a holomorphic function $F_\epsilon $ by $F_\epsilon:= f_\epsilon - u_\epsilon$.
Note that $u_\epsilon$ vanishes on $\Omega^\prime$.
By taking $\epsilon \to 0$, we obtain a holomorphic function $F:= \lim_{\epsilon \to 0} F_\epsilon$ on $\Omega$ such that $F |_{\Omega^\prime} = f$ and
$$
 \int_{\Omega} \frac{  |F|^2 e^{-\varphi}d \lambda}{|z_1|^2 (- \log |z_1|^2 )^{2 - \gamma}} \leq C_1 \int_{\Omega^\prime} |f|^2 e^{-\varphi} d \lambda^\prime .
$$\sq

\begin{rem}
From the proof above, we can easily show that $C \propto \gamma^{-1} (1-\gamma)^{-1}$.
\end{rem}

Note that $\mathcal{V} (\mathcal{U}^\prime , K)$ is finite since the Lelong number $\nu (\varphi , x)$ is upper semicontinuous on ${\rm PSH}(\Omega^\prime) \times \Omega^\prime$ (see \cite[Exercise 2.7]{GZ}).
By using Proposition \ref{main prop}, we can prove Theorem \ref{L1}.

\medskip

{\it Proof of Theorem \ref{L1}.}
We apply Proposition \ref{main prop} for a constant function $f\equiv1$ on $\Omega^\prime$.
We show that there exists an open neighborhood $V$ of $\Omega^\prime$ in $\Omega$ such that the norm of the extension satisfies $|F|> 1/2$ on $V$.
We take $K \Subset \Omega^\prime$ and $0< \alpha < \frac{2}{\mathcal{V} (\mathcal{U}^\prime,K)}$.
By assumption, Theorem \ref{SZ} implies that there exists an open neighborhood $E$ of $K$ in $\Omega^\prime$ and $C_\alpha>0$ such that $\int_{E} e^{-  \alpha \varphi} d \lambda^\prime < C_\alpha $ for any $\varphi \in \mathcal{U}$.
We consider a finite cover of $K$ by small balls, so we may assume that $E$ is a ball in $\Omega^\prime$.
In addition, we take a bounded Stein neighborhood $\tilde{E}$ of $K$.
Since the family $\mathcal{U}$ is $L^1(d\lambda)$-compact, we may assume that $\varphi \leq 0$ on $\tilde{E}$.
By using Proposition \ref{main prop}, we have
$$
 \int_{\tilde{E}}  |F|^2d \lambda  \leq  \int_{\tilde{E}} \frac{  |F|^2 e^{-\alpha \varphi}d \lambda}{|z_1|^2 (- \log |z_1|^2 )^{2 - \gamma }} \leq \int_{E} e^{-  \alpha \varphi} d \lambda^\prime < C_\alpha.
$$
By the mean value inequality with respect to $|F|^2$, we obtain the $C^0$-bound of $|F|^2$ on $V_r:= \{  z \in \Omega \, | \, d(z , \partial \tilde{E})>r \}$ for sufficiently small $r > 0$.
Cauchy integral formula tells us that we obtain the uniform $C^1$-bound of $F$ on $V_{2r}$, i.e., $| \partial F | < C^\prime$ for some constant $C^\prime>0$ on $V_{2r}$.
By using the fact that $F |_{\Omega^\prime} = 1$, we obtain the inequality $|F(q)| \geq |F(p)| - |F(p) - F (q)| \geq 1 - C^\prime |p-q|$ for $p\in \Omega^\prime$ and $q \in V_{2r}$.
Thus, we can find a sufficiently small neighborhood $V$ of $\Omega^\prime$ such that $|F| >1/2$ on $V$, so we have
$$
 \int_{V} \frac{ e^{-\alpha \varphi}d \lambda}{|z_1|^2 (- \log |z_1|^2 )^{2 - \gamma}} \leq 4 \int_{E} e^{-\alpha \varphi} d \lambda^\prime <4C_\alpha.
$$\sq

\begin{rem}
Demailly-Koll\'{a}r and Berman use the Ohsawa-Takegoshi $L^2$-extension theorem in order to show the integrability of $e^{-\alpha \varphi}$ (\cite[Proposition 2.2]{DK} and \cite[Proposition 6.2]{B}).
In our case, we want to deal with potential functions of Poincar\'{e} type {\kah} metrics, i.e., such functions must be equal to $- \infty$ on $D$ (equivalently, on $\Omega^\prime$), so we need to the $L^2$-extension result for the singular measure $d \mu_\gamma$ (see Section 4 in this paper).
\end{rem}


\section{Local $L^p(d \mu_\gamma)$-convergence}

In this section, for $p \geq 1$, we show the local $L^p (d \mu_\gamma)$-convergence of plurisubharmonic functions with some integrability condition.
For simplicity, we use the same symbol as follows:
$$
d\mu_\gamma := \frac{d \lambda}{|z_1|^2 ( \log |z_1|^{-2} )^{2-\gamma}}.
$$
The following theorem is an analogue of the fact that if a sequence of plurisubharmonic functions on $\Omega$ converges in the sense of $L^1 (d\lambda)$-topology then it converges in the sense of $L^{p}_{loc} (d\lambda)$-topology for any $p\geq1$ (see \cite[Theorem 1.48]{GZ}).

\begin{thm}
\label{local Lp}
Assume that $\varphi_j \in {\rm PSH}(\Omega)$ converges to $\varphi \in {\rm PSH}(\Omega)$ in $L^1 (d \lambda)$-topology.
We also assume that there exists $\alpha>0$ such that for any open subset $E \Subset \Omega$, there exists $C_\alpha> 0$ such that
$\int_{E} e^{-\alpha\varphi_j} d \mu_\gamma <C_\alpha$ for any $j$.
Then $\varphi_j $ converges to $\varphi $ in $L^{p}_{loc} (d \mu_\gamma)$-topology for any $p\geq 1$.
\end{thm}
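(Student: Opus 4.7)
The plan is to upgrade $L^1(d\lambda)$-convergence to $L^p(d\mu_\gamma)$-convergence via Vitali's convergence theorem, using the exponential integrability hypothesis to supply the uniform $d\mu_\gamma$-integrability that the singular weight demands, and a subsequence argument to produce the needed pointwise convergence.

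Fix $p\ge 1$ and $E\Subset\Omega$. By the Hartogs-type lemma for $L^1_{\mathrm{loc}}$-convergent families of plurisubharmonic functions, there is a uniform upper bound $\sup_E\varphi_j\le M_0<\infty$. Using the elementary inequality $t^q\le C_{q,\alpha/2}\,e^{(\alpha/2)t}$ for $t\ge 0$, applied to $t=(-\varphi_j)_+$, one obtains
\begin{equation*}
|\varphi_j|^q\;\le\;2^{q-1}M_0^q\;+\;2^{q-1}C_{q,\alpha/2}\,e^{-\alpha\varphi_j/2}\qquad\text{on }E.
\end{equation*}
Since $\gamma<1$ forces $\mu_\gamma(E)<\infty$, Cauchy--Schwarz gives
\begin{equation*}
\int_E e^{-\alpha\varphi_j/2}\,d\mu_\gamma\;\le\;\mu_\gamma(E)^{1/2}\!\left(\int_E e^{-\alpha\varphi_j}\,d\mu_\gamma\right)^{\!1/2},
\end{equation*}
and combined with the hypothesis this produces the uniform bound $\sup_j\|\varphi_j\|_{L^q(E,d\mu_\gamma)}\le C'_q$ for every $q\ge 1$. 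Moreover $d\mu_\gamma\ll d\lambda$, so any $d\lambda$-a.e.\ convergent subsequence of $\{\varphi_j\}$ is also $d\mu_\gamma$-a.e.\ convergent; Fatou applied to $e^{-\alpha\varphi_j}$ along such a subsequence gives $\int_E e^{-\alpha\varphi}\,d\mu_\gamma\le C_\alpha$, and the same $L^q(E,d\mu_\gamma)$-bound for $\varphi$ itself.

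To conclude I argue along subsequences. Given any subsequence of $\{\varphi_j\}$, the $L^1_{\mathrm{loc}}(d\lambda)$-convergence extracts a further sub-subsequence converging to $\varphi$ both $d\lambda$-a.e.\ and therefore $d\mu_\gamma$-a.e.\ on $E$. The triangle inequality together with the uniform $L^{2p}(E,d\mu_\gamma)$-bounds just obtained for $\varphi_j$ and $\varphi$ shows that $\{|\varphi_{j_{k_\ell}}-\varphi|^p\}_\ell$ is uniformly bounded in $L^2(E,d\mu_\gamma)$, hence uniformly $d\mu_\gamma$-integrable on $E$ by the de la Vall\'ee Poussin criterion. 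Vitali's convergence theorem then yields $\|\varphi_{j_{k_\ell}}-\varphi\|_{L^p(E,d\mu_\gamma)}\to 0$, and since every subsequence admits such a norm-convergent sub-subsequence with the same limit $\varphi$, the full sequence converges in $L^p(E,d\mu_\gamma)$. The main obstacle, motivating the extra exponential hypothesis, is that $d\lambda$-small sets clustering near $\{z_1=0\}$ can carry large $d\mu_\gamma$-mass, so $L^1(d\lambda)$-convergence by itself does not control $\int|\varphi_j-\varphi|^p\,d\mu_\gamma$; the exponential bound is exactly what damps the contribution of the deep sublevel sets $\{\varphi_j\ll 0\}$ uniformly in $j$ and makes the Vitali step go through.
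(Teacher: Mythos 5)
Your argument is correct, and it reaches the conclusion by a genuinely different mechanism than the paper's own proof. Both arguments share the same opening moves: a uniform upper bound for $\varphi_j$ on compacts (which the paper obtains by shrinking to a small ball where $\varphi_j\le 0$, and you obtain from the Hartogs-type sub-mean value bound for an $L^1$-bounded family), the elementary inequality $t^q\lesssim_q e^{\alpha t}$ converting the exponential hypothesis into uniform $L^q(E,d\mu_\gamma)$ bounds for every $q\ge 1$, and Fatou along an a.e.-convergent subsequence to transfer these bounds to the limit $\varphi$. The divergence is in how one then shows $\int_E|\varphi_j-\varphi|^p\,d\mu_\gamma\to 0$. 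The paper stays with the reference measure $d\lambda$ and uses Orlicz duality: it picks $\chi(s)\sim s(\log s)^\epsilon$ with $\gamma+\epsilon<1$ so that the density $f_\gamma$ lies in $L^\chi(d\lambda)$, pairs it against $|\varphi_j-\varphi|^\epsilon$ in the conjugate class $L^{\chi^*}(d\lambda)$ (controlled by the $L^3(d\lambda)$-convergence together with the exponential integrability), and finally interpolates from exponent $\epsilon$ up to $p$ by Cauchy--Schwarz in $d\mu_\gamma$. You instead work directly in the finite measure space $(E,d\mu_\gamma)$: absolute continuity $d\mu_\gamma\ll d\lambda$ transports a.e.\ convergence, the uniform $L^{2p}(E,d\mu_\gamma)$ bounds give uniform integrability of $|\varphi_j-\varphi|^p$ by de la Vall\'ee Poussin, and Vitali plus the sub-subsequence principle finish. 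Your route is shorter and more elementary --- no Orlicz norms and no constraint $\gamma+\epsilon<1$; it uses only that $d\mu_\gamma$ is a finite Borel measure absolutely continuous with respect to $d\lambda$ --- at the price of being purely qualitative, since the subsequence extraction destroys any rate, whereas the paper's chain of inequalities bounds $\int_B|\varphi_j-\varphi|^\epsilon\,d\mu_\gamma$ explicitly in terms of $\Vert\varphi_j-\varphi\Vert_{L^3(d\lambda)}$. One small point to make explicit: the $L^{2p}(E,d\mu_\gamma)$ bound for the limit also requires $\sup_E\varphi<\infty$, which holds because $\varphi$ is itself plurisubharmonic and hence locally bounded above.
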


\begin{proof}

By the result of $L^{p}_{loc}$-convergence with respect to the Lebesgue measure $d \lambda$ (see \cite[Theorem 1.48]{GZ}), it suffices to show the $L^{p}_{loc} (d \mu_\gamma)$-convergence on arbitrary small open ball which intersects $\Omega^\prime$.
Since $\varphi_j \to \varphi$ in $L^1 (d \lambda)$, we can find a subsequence $\varphi_{j_k}$ such that $\varphi_{j_k} \to \varphi$ almost everywhere as $k \to \infty$.
In addition, we may assume that $\varphi_j \leq 0$ by taking a sufficiently small ball $B$ in $\Omega$.
Fatou's lemma implies that 
$$
\int_B e^{-\alpha \varphi} d \mu_\gamma \leq  \liminf_k \int_B e^{-\alpha \varphi_{j_k}} d \mu_\gamma \leq C_\alpha
$$ 
and we obtain $ e^{-\varphi} \in L^{\alpha}_{loc} (d \mu_\gamma)$.
By the elementary inequality $\frac{(-\alpha \varphi_j)^p}{p!} \leq e^{-\alpha\varphi_j} $ for any $p\geq 1$, we have $ \varphi_j , \varphi \in L^{p}_{loc} (d\mu_\gamma)$.

For $\epsilon \in (0,1)$, we define an convex and increasing function by
$$
\chi (s) := \int_0^s (\log (x+1))^\epsilon dx , \,\,\,  s \in \mathbb{R}_{\geq 0} .
$$
Note that $\chi (s) \leq (s+1) (\log (s+1))^\epsilon +O(1) \leq  s (\log s)^\epsilon +O(1)$.
Recall that the Legendre transformation $\chi^*(t)$ is defined by $\chi^*(t) := \sup_{s \geq 0} ( st - \chi(s))$.
By the direct computation, we have
$$
\chi^*(t) = \int_0^t ( \exp (y^{1/\epsilon}) - 1) dy  ,\,\,\,  t \in \mathbb{R}_{\geq 0}.
$$
Note that $\chi^*(t) \leq t   \exp (t^{1/\epsilon})$ for any $t\geq 0$.
For simplicity, we write $d \mu_\gamma = f_\gamma d \lambda$.
If $\gamma + \epsilon <1$, it follows from the direct computation that $C_{\gamma, \epsilon} := \int_\Omega \chi (f_\gamma) d\lambda <\infty$.
Indeed, we have
$$
\chi (f_\gamma)  \approx \frac{1}{|z_1|^2 \left( \log |z_1|^{-2} \right )^{2-\gamma - \epsilon}}
$$
near $\{z_1 = 0\} = \Omega^\prime$.
We fix $r < \alpha/3$.
Note that we may assume that $d\lambda$ is a probability measure on $B$ by scaling.
By the {\hol} inequality for the pair $\chi$ and $\chi^*$ (see \cite[Proposition 2.15]{BBEGZ} and \cite[Proposition 1.3]{Da}), we have
\begin{eqnarray*}
\int_B | \varphi_j - \varphi |^\epsilon d\mu_\gamma 
&\leq& 2\Vert  r | \varphi_j - \varphi |^\epsilon \Vert_{L^\chi(d \lambda)} \Vert r^{-1} f_\gamma \Vert_{L^{\chi^*}(d \lambda)}.
\end{eqnarray*}
Here, the symbols $\Vert \cdot \Vert_{L^\chi(d \lambda)}$ and $\Vert \cdot \Vert_{L^{\chi^*} (d \lambda)}$ are the Orlicz norms with respect to the functions $\chi$ and $\chi^*$, respectively (see \cite[Section 2]{BBEGZ} and \cite[Section 1]{Da}).
Since $\chi^*(t) \leq t   \exp (t^{1/\epsilon})$, we have
\begin{eqnarray*}
\int_B \chi^* (r| \varphi_j - \varphi |^\epsilon) d\lambda 
&\leq& \int_B r | \varphi_j - \varphi |^\epsilon e^{ r| \varphi_j - \varphi |} d \lambda \\
& \leq &  r \int_B  | \varphi_j - \varphi |^\epsilon e^{ -r \varphi_j}  e^{ -r \varphi} d \lambda \\
& \leq &  r \Vert  | \varphi_j - \varphi |^\epsilon \Vert_{L^3 (d\lambda)}  \Vert e^{ -r \varphi_j}   \Vert_{L^3 (d\lambda)} \Vert  e^{ -r \varphi}  \Vert_{L^3 (d\lambda)}.
\end{eqnarray*}
Here, we have used the {\hol} inequality in the last inequality.
It follows from \cite[Theorem 1.48]{GZ} that $\Vert  | \varphi_j - \varphi |^\epsilon \Vert_{L^3 (d\lambda)} \to 0$.
Thus, the assumption of Theorem \ref{local Lp} implies that we have $\int_B | \varphi_j - \varphi |^\epsilon d\mu_\gamma  \to 0$ as $j \to \infty$.
By the Schwarz's inequality (with respect to the measure $d\mu_\gamma$), we have
\begin{eqnarray*}
\int_B | \varphi_j - \varphi |^p d\mu_\gamma 
&\leq& \left( \int_B | \varphi_j - \varphi |^\epsilon d\mu_\gamma \right)^{1/2} \left(  \int_B | \varphi_j - \varphi |^{2p -\epsilon} d\mu_\gamma \right)^{1/2}
\end{eqnarray*}
for any $p \geq 1$.
We have already showed that the last integral in the right hand side is bounded, so we can prove that $\int_B | \varphi_j - \varphi |^p d\mu_\gamma \to 0$ as $j \to \infty$.
Thus, we have finished the proof.
\end{proof}

\begin{rem}
In general, a sequence of plurisubharmonic functions which converges to some plurisubharmonic function in $L^1 (d\lambda)$, does not converges in $L^p (d\mu_\gamma)$.
Indeed, for $t \in (0,1)$, $u_t:= \log (|z|^2+t)\in {\rm PSH}(\mathbb{B})$ converges to $u_0:= \log |z|^2 \in {\rm PSH}(\mathbb{B})$ in $L^1 (d\lambda)$ as $t \to 0$.
Here, $\mathbb{B}$ denotes the unit disk in $\mathbb{C}$.
But, $u_t$ does not converge to $u_0$ in $L^p (d\mu_0)$ for any $p \geq 1$ because $u_0 \notin L^p (d\mu_0)$.
\end{rem}

By Theorem \ref{L1}, we immediately have the following corollary.
\begin{cor}
Assume that $\varphi_j \in {\rm PSH}(\Omega)$ converges to $\varphi \in {\rm PSH}(\Omega)$ in $L^1 (d \lambda)$-topology.
If the family $\{ \varphi_j |_{\Omega^\prime} \}$ is $L^1 (d\lambda^\prime)$-compact, then $\varphi_j $ converges to $\varphi $ in $L^{p}_{loc} (d \mu_\gamma)$-topology for any $p\geq 1$.
\end{cor}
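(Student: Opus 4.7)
The plan is to verify the integrability hypothesis of Theorem \ref{local Lp} for the family $\mathcal{U} := \{\varphi_j\}_j \cup \{\varphi\}$ and then invoke Theorem \ref{local Lp} directly. By hypothesis $\mathcal{U}$ is $L^1(d\lambda)$-compact in ${\rm PSH}(\Omega)$, and the restricted family $\mathcal{U}^\prime := \{\varphi_j|_{\Omega^\prime}\}_j$ is $L^1(d\lambda^\prime)$-compact in ${\rm PSH}(\Omega^\prime)$; adjoining the restriction of the $L^1(d\lambda^\prime)$-limit of some subsequence keeps this compactness. Since $L^{p}_{loc}(d\mu_\gamma)$-convergence is a local statement, it suffices to produce, for any chosen compact $K \Subset \Omega$, some $\alpha > 0$ and a neighborhood of $K$ on which $\int e^{-\alpha \varphi_j}\, d\mu_\gamma$ is uniformly bounded in $j$.

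Fix such a $K$. By upper semicontinuity of the Lelong number on the $L^1$-compact families $\mathcal{U}$ and $\mathcal{U}^\prime$, both $\mathcal{V}(\mathcal{U}, K)$ and $\mathcal{V}(\mathcal{U}^\prime, K \cap \Omega^\prime)$ are finite. Choose $\alpha > 0$ smaller than both $2/\mathcal{V}(\mathcal{U}, K)$ and $2/\mathcal{V}(\mathcal{U}^\prime, K \cap \Omega^\prime)$ (with the usual convention that $2/0 = +\infty$). Applying Theorem \ref{L1} to $\mathcal{U}$, to the compact set $K \cap \Omega^\prime \Subset \Omega^\prime$, and to this $\alpha$ yields an open neighborhood $V$ of $K \cap \Omega^\prime$ in $\Omega$ and a constant $C>0$ with $\int_{V} e^{-\alpha \varphi_j}\, d\mu_\gamma \leq C$ uniformly in $j$.

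On the complement $K \setminus V$, the coordinate $|z_1|$ is bounded below by some $c>0$, so the density $|z_1|^{-2}(\log |z_1|^{-2})^{\gamma - 2}$ is uniformly bounded and $d\mu_\gamma \leq C^\prime d\lambda$ on a fixed open neighborhood of $K \setminus V$. The classical Skoda--Zeriahi estimate (Theorem \ref{SZ}) applied to $\mathcal{U}$ with the same $\alpha$ gives a uniform bound on $\int_{K \setminus V} e^{-\alpha \varphi_j}\, d\lambda$, hence on the corresponding $d\mu_\gamma$-integral. Adding the two estimates yields uniform $\alpha$-integrability of $\{e^{-\alpha \varphi_j}\}$ with respect to $d\mu_\gamma$ on a neighborhood of $K$, which is exactly the hypothesis of Theorem \ref{local Lp}. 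That theorem then delivers the desired $L^{p}_{loc}(d\mu_\gamma)$-convergence of $\varphi_j \to \varphi$ for every $p \geq 1$.

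The only real obstacle is the bookkeeping required to pick a single $\alpha>0$ that simultaneously controls the Skoda--Zeriahi contribution away from $\Omega^\prime$ and the Theorem \ref{L1} contribution near $\Omega^\prime$; once the uniform finiteness of the two Lelong number suprema is noted, such an $\alpha$ exists automatically, and the remainder of the argument is a clean concatenation of Theorems \ref{SZ}, \ref{L1}, and \ref{local Lp}.
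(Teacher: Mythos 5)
Your proposal is correct and follows exactly the route the paper intends: the paper states this corollary as an immediate consequence of Theorem \ref{L1}, whose conclusion is precisely the uniform integrability hypothesis of Theorem \ref{local Lp} near $\Omega^\prime$ (with the classical Skoda--Zeriahi bound handling the part of a compact set away from $\Omega^\prime$, where $d\mu_\gamma$ is comparable to $d\lambda$). Your additional bookkeeping --- choosing a single $\alpha$ from the finiteness of the two Lelong-number suprema and localizing to a fixed compact set --- just makes explicit what the paper leaves implicit.
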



\section{Integrability threshold}

In this section, we discuss the positivity of the integrability threshold with respect to the measure $d \mu_\gamma$ for some $L^1$-compact family of quasi--plurisubharmonic functions.

\subsection{Nonpluripolar product and full mass }

Let $(X ,\omega)$ be an $n$-dimensional compact {\kah} manifold.
We recall the definitions of the nonpluripolar complex {\ma} measure and the space of $\omega$-plurisubharmonic functions with full mass.
We define the space of $\omega$-(quasi-)plurisubharmonic functions by
$$
{\rm PSH}(X , \omega):= \left\{ \, \varphi: X \to \mathbb{R} \cup \{-\infty \} \,   \,  {\rm usc \, }\middle|\,  \omega_\varphi:= \omega + \dol \varphi \geq 0 \,\,\, ({\rm\, as \,\, a \,\, current \, })  \right\}.
$$
Here, the word ``usc'' means ``upper semicontinuous''.
For $\varphi_k \in {\rm PSH}(X , \omega) \cap L^\infty (X)$  $(k=1,...,r)$, Bedford-Taylor \cite{BT} showed that the wedge product $ \wedge_{k=1}^r \omega_{\varphi_k}$ is an well-defined closed positive $(r,r)$-current (see also \cite[\S3 in Chapter III]{De}).
So, we can define the complex {\ma} measure $\omega_\varphi^n$ for $\varphi \in  {\rm PSH}(X , \omega) \cap L^\infty (X)$.
Moreover, for a possibly unbounded function $\varphi \in  {\rm PSH}(X , \omega)$, we can define the complex {\ma} measure as follows.

\begin{definition}{\rm (\cite{GZ2, BEGZ})}
\label{nnpl}
For $j \in \mathbb{Z}_{>0}$, we define the canonical cut-offs of $u \in {\rm PSH}(X ,\omega)$ by $\varphi_j:= \max \{ \varphi , -j \} \in {\rm PSH}(X,\omega) \cap L^\infty (X).$
We define the nonpluripolar complex {\ma} measure of $\varphi \in {\rm PSH}(X, \omega)$ by
$$
\omega_{\varphi}^n:= \lim_{j \to \infty}\bm{1}_{ \{ \varphi >-j \}} \omega_{\varphi_j}^n .
$$
\end{definition}

By using Definition \ref{nnpl}, we define the full mass class in $ {\rm PSH}(X, \omega)$ as follows.
\begin{definition}{\rm (\cite[Definition 2.1]{BEGZ} )}
\label{full mass}
The set of $\omega $-plurisubharmonic functions of {\it full mass} is defined by
$$
\mathcal{E} (X, \omega):= \left\{  \varphi  \in {\rm PSH}(X,\omega) \,\,   \middle| \,\, \int_X \omega_{\varphi}^{n} =  V  \right\}.
$$
\end{definition}
Note that the complex {\ma} measure $\omega_{\varphi_j}^n$ does not put mass on pluripolar sets, i.e.,
$
\int_X \omega_{\varphi_j}^{n} = \int_X \omega^n = V
$
for any $j$ by the Stokes theorem.
In general, for $\varphi \in {\rm PSH}(X, \omega)$, the total {\ma} mass of $\omega_{\varphi}^{n}$ is equal to or less than $V$, i.e., $\int_X \omega_\varphi \leq V$.
Full mass $\omega$-plurisubharmonic functions have mild singularities in the following sense.
\begin{prop}{\rm (\cite[Corollary 1.8]{GZ2} and see \cite[Proposition 2.11]{Da})}
\label{zero Lelong}
For $\varphi \in \mathcal{E} (X ,\omega)$, the Lelong number of $\varphi$ is identically zero.
\end{prop}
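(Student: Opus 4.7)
The plan is to argue by contradiction: suppose $\nu := \nu_\varphi(x_0) > 0$ at some point $x_0 \in X$, and exhibit a strict loss of nonpluripolar Monge-Amp\`ere mass of $\varphi$ of at least $(2\pi \nu')^n$ for some chosen $\nu' \in (0, \nu)$, contradicting $\int_X \omega_\varphi^n = V$. The mass-loss should be witnessed by the fact that the canonical cut-offs $\varphi_j := \max(\varphi, -j)$ concentrate a definite amount of Monge-Amp\`ere mass on the shrinking sublevel sets $\{\varphi \leq -j\}$, which by Definitions \ref{nnpl} and \ref{full mass} is incompatible with full mass.

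Fix $\nu' \in (0,\nu)$, a chart $(U,z)$ centered at $x_0$, and a smooth local potential $\rho$ with $\omega = dd^c \rho$ on $U$. By the definition of the Lelong number there exist $r_0>0$ and $C>0$ with $\varphi(z) \leq \nu' \log|z| + C$ on $B := B(x_0, r_0) \Subset U$. I introduce the bounded local model
\[
v_j(z) := \max\!\bigl(\nu' \log|z| + C + \rho(z), \, -j\bigr),
\]
which equals $-j$ on a small neighborhood $B_j \subset B$ of $x_0$ that shrinks as $j \to \infty$. A standard Bedford-Taylor computation, based on $(dd^c \log|z|)^n = (2\pi)^n \delta_0$ and the continuity of the Monge-Amp\`ere operator under truncation, shows that $(dd^c v_j)^n$ is a positive measure on $B$ of total mass $(2\pi\nu')^n$ for all $j$ large, supported on the level set $\{v_j = -j\} \subset B_j$.

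The heart of the argument is to compare the bounded psh functions $\rho + \varphi_j$ and $v_j$ on $B$. The choice of $C$ gives $\rho + \varphi_j \leq v_j$ throughout $B$, and a Bedford-Taylor comparison argument applied on a sublevel set $\{v_j < -j_0\} \Subset B$ with a fixed $j_0 \ll j$ (where both functions are very negative, so boundary values match up to a vanishing error) yields the local mass lower bound
\[
\int_B \omega_{\varphi_j}^n \;=\; \int_B \bigl(dd^c (\rho + \varphi_j)\bigr)^n \;\geq\; (2\pi \nu')^n - o(1), \qquad j \to \infty,
\]
with the $(2\pi\nu')^n$ contribution localized where $v_j = -j$, hence inside $B_j \subset \{\varphi \leq -j + O(1)\}$. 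On the other hand, $\int_X \omega_{\varphi_j}^n = V$ for every $j$ together with $\varphi \in \mathcal{E}(X,\omega)$ forces $\int_{\{\varphi \leq -j\}} \omega_{\varphi_j}^n \to 0$ by Definitions \ref{nnpl} and \ref{full mass}; a slight enlargement of the sublevel sets absorbs the $O(1)$ shift and yields the desired contradiction $(2\pi\nu')^n \leq 0$. Hence $\nu=0$, and since $x_0$ was arbitrary, $\nu_\varphi \equiv 0$ on $X$.

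The main obstacle is the local comparison producing the lower bound $(2\pi\nu')^n - o(1)$, together with its localization in $\{\varphi \leq -j+O(1)\}$: the two functions $\rho + \varphi_j$ and $v_j$ cannot be compared directly on all of $B$ without care for boundary behavior, and one must work on a sublevel set $\{v_j < -j_0\}$ strictly interior to $B$ so that the classical Bedford-Taylor comparison principle applies. An equivalent and perhaps cleaner route is to invoke a Demailly-type inequality relating the nonpluripolar Monge-Amp\`ere mass of an $\omega$-psh function to the sum of its point Lelong numbers raised to the power $n$; either way, the conclusion follows from the explicit Dirac-type mass $(2\pi\nu')^n$ produced by a log-pole of strength $\nu'$ at $x_0$.
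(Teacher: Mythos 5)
The paper offers no proof of this proposition --- it is quoted from \cite[Corollary 1.8]{GZ2} and \cite[Proposition 2.11]{Da} --- and your overall strategy (a positive Lelong number at $x_0$ forces a loss of at least $(2\pi\nu')^n$ of nonpluripolar mass, contradicting full mass via the identity $\int_{\{\varphi\le -j\}}\omega_{\varphi_j}^n = V-\int_{\{\varphi>-j\}}\omega_{\varphi}^n\to 0$) is exactly the strategy behind those references. The gap is in your central comparison step. First, the assertion that on $\partial\{v_j<-j_0\}$ the boundary values of $\rho+\varphi_j$ and $v_j$ ``match up to a vanishing error'' is false: there $v_j=-j_0$ while $\rho+\varphi_j$ can be as low as $-j+\inf_B\rho$, so the discrepancy is of order $j-j_0$, not $o(1)$. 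More seriously, any correctly stated comparison principle applied to the pair $u=\rho+\varphi_j\le v=v_j$ only transfers the mass of $(dd^cv_j)^n$ carried by a set of the form $\{u<v-c\}$, with $c$ at least as large as the boundary discrepancy, and such sets miss precisely the region $B_j=\{v_j=-j\}$ where the model's concentrated mass sits: on $B_j$ one has $\rho+\varphi_j\ge \rho-j\ge v_j-\sup_B|\rho|$, because the truncation $\varphi_j\ge -j$ caps how singular $\rho+\varphi_j$ can be. Shifting truncation levels does not help, for the same reason; the comparison as proposed transfers zero mass. (Two smaller inaccuracies: the total mass of $(dd^cv_j)^n$ on $B$ is not $(2\pi\nu')^n$ and the measure is not supported on $\{v_j=-j\}$ --- only its restriction to that set has mass at least $\nu'^n$ times the Dirac constant; and the $(2\pi\nu')^n$ of mass of $\omega_{\varphi_j}^n$ is in general localized only in the full sublevel set $\{\varphi\le -j+O(1)\}$, not in $B_j$: for $\varphi$ with Lelong number $2\nu'$ the truncation mass lives near $\partial\{\varphi\le-j\}$, entirely outside $B_j$. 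The latter does not hurt the final contradiction, but it shows the localization mechanism you describe is not the right one.)

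The alternative you mention in your last sentence is the correct repair and is essentially how the cited proofs run. Either (i) introduce the auxiliary functions $w_k:=\max(\rho+\varphi,\,\nu'\log|z|-k)$, which are locally bounded off $x_0$, satisfy $\max(w_k,-j)=\max(\rho+\varphi,-j)$ for $k\ge j$ up to the harmless $O(1)$ coming from $\rho$, and carry residual Monge--Amp\`ere mass $(dd^cw_k)^n(\{x_0\})\ge \nu'^n\,(dd^c\log|z|)^n(\{0\})$ by Demailly's comparison theorem for Lelong numbers; a total-mass accounting over a ball then gives $\int_{\{\varphi\le -j+O(1)\}}\omega_{\varphi_j}^n\ge \nu'^nc_n$ for all $j$. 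Or (ii) use the capacity route: $\mathrm{Cap}_\omega(\{\varphi<-j\})\ge \mathrm{Cap}_\omega\bigl(B(x_0,e^{(-j-C)/\nu'})\bigr)\approx (\nu'/j)^n$, while membership in $\mathcal{E}(X,\omega)$ forces $j^n\,\mathrm{Cap}_\omega(\{\varphi<-j\})\to 0$. With either replacement for the comparison step, the rest of your argument goes through.
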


\begin{exam}
The computation in Auvray's paper \cite[\S 1.3]{Au} implies that $\Psi = - \log \log \Vert s_D \Vert_{h}^{-2} \in \mathcal{E} (X, \omega)$, where $s_D$ is a defining section of  a smooth divisor $D$ and $h$ is a Hermitian metric on $\mathscr{O}(D)$ (see the next subsection).
So, we obtain ${\rm PSH} (X, \omega) \cap L^\infty (X) \subsetneq \mathcal{E} (X, \omega)$.
For more examples of unbounded $\omega$-plurisubharmonic functions with full mass, see the systematic construction in \cite[Example 2.14]{GZ2} and \cite[\S2.3]{Da}.

\end{exam}

\subsection{Positivity of integrability threshold}

Let $(X, \omega)$ be an $n$-dimensional compact {\kah} manifold.
Let $D$ be a smooth divisor on $X$ and $s_D$ be a defining section of $D$.
We take a Hermitian metric $h$ on the line bundle $\mathscr{O}(D)$ which is the associated line bundle of $D$.
By scaling, we may assume that $\Vert s_D \Vert_{h} < e^{-1}$. 
Set $\Psi:= - \log \log \Vert s_D \Vert^{-2}_{h}$.
By scaling again, we may assume that $\Psi \in {\rm PSH} (X ,\omega)$ (see \cite{Au}).
For $\gamma \in [0,1)$, we set the singular volume form $d\mu_\gamma$ with $L^1(\omega^n)$-density as follows:
$$
d \mu_\gamma:=  \frac{\omega^{n}}{ \Vert s_D \Vert^{2}_{h} (\log \Vert s_D \Vert^{-2}_{h})^{2-\gamma} }  .
$$
If $\gamma =0$, we have $d \mu_0 \approx \omega_{\Psi}^{n}$ by the direct computation.
Since $d \mu_\gamma $ has finite volume for $\gamma \in [0,1)$, we may assume that $d \mu_\gamma$ is a probability measure on $X$ by scaling.

Firstly, we define the integrability threshold for {\it a family} in the set of quasi-plurisubharmonic functions and a probability measure. 

\begin{definition}
\label{alpha1}
For $\mathcal{U} \subset {\rm PSH}(X, \omega)$ and a probability measure $d m$ on $X$, we define the {\it integrability threshold of $\mathcal{U}$ for $d m$} by
\begin{equation}
\label{integral3}
\alpha (\mathcal{U}, d m):= \sup \left\{  \alpha \geq 0 \,\, \middle| \,\  \exists C_\alpha >0 \,\, {\rm s.t.} \,\, \int_{X} e^{-\alpha \Phi  } d m<C_\alpha  \,\,\, {\rm for} \,\,\, \forall \Phi \in \mathcal{U} \right\}.
\end{equation}

\end{definition}

Secondly, we recall the definition of the $\alpha$-invariant introduced by Tian \cite{Ti} which is a holomorphic invariant of a {\kah} manifold.

\begin{definition}{\rm (\cite[p.229]{Ti})}
\label{alpha2}
We denotes the set of normalized quasi--plurisubharmonic functions as ${\rm PSH}_0 (M , \theta)$ for a {\kah} manifold $ (M , \theta)$, i.e.,
$$
{\rm PSH}_0 (M , \theta) :=  \left\{\,  \varphi \in {\rm PSH}(M, \theta) \, \middle| \, \sup_{M} \varphi =0 \, \right\}.
$$
We write $d\lambda = V^{-1} \omega^n$ and $d \lambda^\prime = V_{D}^{-1} \omega^{n-1} |_D$ respectively, where $V_D:= \int_D \omega^{n-1}$.
We define the {\it $\alpha$-invariants for $(X, \omega)$ and $(D, \omega |_D)$} by:
$$\alpha_\omega (d \lambda):= \alpha ( {\rm PSH}_{0} (X , \omega) , d \lambda) , \,\,\,\,\, \alpha_{\omega|_D} (d \lambda^\prime):= \alpha ( {\rm PSH}_{0} (D , \omega |_D) , d \lambda^\prime ),$$
respectively.

\end{definition}

Note that the class ${\rm PSH}_0(X, \omega )$ is $L^1 (d \lambda)$-compact (\cite[Proposition 1.7]{GZ1}, \cite[\S 1.3]{BBEGZ}).
By the boundedness of the Lelong numbers on compact complex manifold (see \cite[Proposition 2.1]{Ti}, \cite[Lemma 2.3.43]{MM}), it is showed that the $\alpha$-invariant of the set of normalized quasi-plurisubharmonic functions for the Lebesgue measure is positive.

\begin{thm}{\rm (\cite[Theorem 1.5]{Ti} and \cite[Proposition 1.4]{BBEGZ})}
\label{Tian}
We have $\alpha_\omega (d \lambda) > 0$ $($so, $ \alpha_{\omega|_D} (d \lambda^\prime) > 0$$)$.
\end{thm}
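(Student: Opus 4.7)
\medskip
\noindent\textbf{Proof plan.} The strategy is to reduce the statement to a finite application of the Skoda--Zeriahi theorem (Theorem \ref{SZ}) on a suitable cover of $X$. Two ingredients are needed: (a) the $L^{1}(d\lambda)$-compactness of ${\rm PSH}_{0}(X,\omega)$, which is classical and already recalled in the text; and (b) a uniform upper bound
$$
C := \sup\{\, \nu_{\varphi}(x) \mid \varphi \in {\rm PSH}_{0}(X,\omega),\, x \in X \,\} < \infty,
$$
which is the content of \cite[Proposition 2.1]{Ti} and \cite[Lemma 2.3.43]{MM}. Roughly, if $\varphi \in {\rm PSH}(X,\omega)$ carries a logarithmic pole of weight $c$ at a point, then pairing the positive current $\omega + \dol\varphi$ with $\omega^{n-1}$ and exploiting the normalization $\sup_{X}\varphi = 0$ forces $c$ to be controlled in terms of geometric data of $(X,\omega)$.

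Granting (a) and (b), I would cover $X$ by a finite collection of coordinate balls $U_{i} \Subset U_{i}'$ $(i=1,\dots,N)$ on which $\omega = \dol \rho_{i}$ for smooth strictly psh potentials $\rho_{i}$. For $\varphi \in {\rm PSH}_{0}(X,\omega)$, the local representative $u_{i} := \varphi + \rho_{i}$ is psh on $U_{i}'$, its Lelong numbers coincide with those of $\varphi$, and the family $\{u_{i}\}$ inherits $L^{1}(d\lambda)$-compactness on $U_{i}'$ from the global compactness of ${\rm PSH}_{0}(X,\omega)$, because each $\rho_{i}$ is uniformly bounded on $\overline{U_{i}'}$. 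Applying Theorem \ref{SZ} on each pair $(U_{i}', \overline{U_{i}})$, together with the estimate $\mathcal{V}(\{u_{i}\},\overline{U_{i}}) \leq C$, supplies, for any fixed $\alpha \in (0, 2/C)$, a uniform bound $\int_{U_{i}} e^{-\alpha u_{i}} d\lambda \leq C_{i,\alpha}$, hence $\int_{U_{i}} e^{-\alpha \varphi} d\lambda \leq e^{\alpha \|\rho_{i}\|_{\infty}} C_{i,\alpha}$. Summing over the finite cover gives $\int_{X} e^{-\alpha \varphi} d\lambda$ uniformly bounded in $\varphi \in {\rm PSH}_{0}(X,\omega)$, so $\alpha_{\omega}(d\lambda) \geq 2/C > 0$. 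The same argument applied to the compact {\kah} manifold $(D,\omega|_{D})$ of dimension $n-1$ yields $\alpha_{\omega|_{D}}(d\lambda') > 0$.

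The main obstacle is ingredient (b); once that uniform Lelong-number bound is in hand, everything else is a routine patching of Theorem \ref{SZ} over a finite cover. A subtle but minor secondary point is verifying that the local $L^{1}$-compactness of $\{u_{i}\}$ on $U_{i}'$ is correctly inherited from the global compactness of $\{\varphi\}$, which is precisely where the boundedness of the local potentials $\rho_{i}$ is used.
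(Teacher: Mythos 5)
Your proposal is correct and follows exactly the route the paper indicates: $L^{1}(d\lambda)$-compactness of ${\rm PSH}_{0}(X,\omega)$ together with the uniform Lelong-number bound of \cite[Proposition 2.1]{Ti}, \cite[Lemma 2.3.43]{MM}, fed into Theorem \ref{SZ} over a finite cover by coordinate charts with local potentials (the paper itself only cites \cite{Ti} and \cite{BBEGZ} for this, sketching precisely this argument in the preceding sentences). The only cosmetic remark is that the Lelong-number bound in ingredient (b) is purely cohomological and does not actually use the normalization $\sup_{X}\varphi=0$, which is needed instead for the $L^{1}$-compactness in ingredient (a).
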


\begin{rem}
Note that $a_\omega (d \lambda) $ in Definition \ref{alpha2} does not depends on the choice of (smooth) {\kah} metric in the fixed cohomology class $[\omega]$.
It is well-known that $\alpha ( d \lambda)$ is invariant under the action of holomorphic automorphisms (see \cite[Proposition 2.1]{Ti}).
\end{rem}

\begin{rem}
In the Fano case, i.e., $[\omega] = c_1 (X)$, Tian proved that there exists a {\kah}-Einstein metric if $\alpha_\omega (d \lambda) > \frac{n}{n+1}$, i.e., the positivity condition for $\alpha$-invariant gives a sufficient condition of the existence of a {\kah}-Einstein metric \cite[Theorem 2.1]{Ti}.
The similar results for a log Fano pair $(X,D)$ and a general polarization are given in \cite[\S 4.4]{BBEGZ}, \cite[Theorem 1.4]{Der} and \cite[Proposition 4.22]{Zh}.
\end{rem}

\begin{rem}
We know that the positivity of $\alpha$-invariant is related to the Lelong number of (quasi-)plurisubharmonic functions and singularities of a probability measure.
If we consider a measure with $L^p (d\lambda)$-density for some $p>1$, i.e., $dm = f d \lambda , \,\, f \in L^p (d\lambda)$, Theorem \ref{Tian} and the {\hol} inequality tell us that the integrability threshold for the set of normalized $\omega$-plurisubharmonic functions ${\rm PSH}_0 (X , \omega)$ is positive (see \cite[Proposition 1.4]{BBEGZ} and \cite[\S4.4]{Zh}).
However, in our case, i.e., the density of the measure $d\mu_\gamma$ is not $L^p (d\lambda)$-integrable for any $p>1$, we can find an $\omega$-plurisubharmonic function with {\bf zero} Lelong number such that the integral in (\ref{integral3}) diverges.
The following examples tell us that we need to restrict the function space in order to obtain the positivity of the integrability threshold for $d \mu_\gamma$.
\end{rem}

\begin{exam}[integrability threshold for $\mathcal{U} \ni \Psi$]
\label{example 1}
If a family $\mathcal{U}$ contains a potential function of Poincar\'{e} type {\kah} metric, then we have $\alpha (\mathcal{U}, d \mu_\gamma) \neq \infty$.
For instance, if $\Psi =  - \log \log \Vert s_D \Vert^{-2}_{h} \in \mathcal{E} (X, \omega) \cap \mathcal{U}$, we obtain $\alpha (\mathcal{U}, d \mu_\gamma) \leq 1 -\gamma$ by the following computation:
$$
\int_{X} e^{- (1-\gamma) \Psi } d \mu_\gamma = \int_{X}  \frac{\omega^{n}}{ \Vert s_D \Vert^{2}_{h} (\log \Vert s_D \Vert^{-2}_{h}) } = \infty.
$$
\end{exam}

\begin{exam}[vanishing case]
\label{example 2}
For sufficiently small $\epsilon > 0$, we set $\varphi_\epsilon:= -\epsilon (\log \Vert s_D \Vert^{-2}_{h})^\epsilon \in \mathcal{E} (X, \omega)$.
The Lelong number of $\varphi_\epsilon$ is $0$ on $X$.
For any $\alpha>0$, we can find a sufficiently small neighborhood $V$ of $D$ such that $\alpha \varphi_\epsilon \leq (1-\gamma) \Psi$ on $V$, so we have
\begin{eqnarray*}
 \int_{X} e^{- \alpha \varphi_\epsilon } d \mu_\gamma 
&\geq&   \int_{V}    \frac{e^{-(1-\gamma)\Psi}\omega^{n}}{ \Vert s_D \Vert^{2}_{h} (\log \Vert s_D \Vert^{-2}_{h})^{2-\gamma} } =  \infty.
\end{eqnarray*}
Thus, if $\varphi_\epsilon \in \mathcal{U}$, we have $\alpha (\mathcal{U}, d \mu_\gamma) = 0$.
\end{exam}

By Example \ref{example 2}, we know that the integrability threshold for the singular measure $d \mu_\gamma$ must be 0 for {\bf all} (normalized) $\omega $-plurisubharmonic functions with full mass.
But, from Example \ref{example 1}, we expect that the integrability threshold is positive for a family of {\kah} potentials of Poincar\'{e} type.
In order to consider elements in $\mathcal{E}(X ,\omega)$ which make the corresponding integrability threshold positive, we introduce the following threshold for potential functions.
\begin{definition}[log-log threshold]
\label{log-log}
Fix a sufficiently small open neighborhood $V$ of $D$.
For $\Phi \in {\rm PSH}(X, \omega ) $, we define the {\it log-log threshold of $\Phi$} by
\begin{equation*}
\nu_D (\Phi):= \inf \left\{ c > 0 \,\, \middle| \,\,  \exists \varphi \in {\rm PSH} (V, \omega ) \,\, {\rm s.t.}\,\, \varphi |_D \in {\rm PSH}_0 (D , \omega_D) \,\, {\rm and}\,\,  \Phi \geq c \Psi + \varphi \,\, {\rm on}\,\, V \right\}.
\end{equation*}
For $\mathcal{U} \subset {\rm PSH}(X, \omega )$, we define the log-log threshold of $\mathcal{U}$ by
$
\nu_D (\mathcal{U}):= \sup \{ \nu_D(\Phi) | \Phi \in \mathcal{U} \}.
$
\end{definition}

\begin{rem}
\label{reason}
For $\Psi \in {\rm PSH}(X ,\omega)$ in Example \ref{example 1}, we have $\nu_D (\Psi) = 1$. 
On the other hand, for $\varphi_\epsilon \in {\rm PSH}(X ,\omega)$ in Example \ref{example 2}, we have $\nu_D (\varphi_\epsilon) = + \infty$ for any $\epsilon >0$ since the inequality $\varphi_\epsilon \leq c \Psi$ holds on a sufficiently small neighborhood of $D$ for any $c>0$.
Thus, the subset $\{ \nu_{D} (\Phi) <  + \infty \} \subset {\rm PSH }(X , \omega)$ doesn't contain functions like $\varphi_\epsilon$.
\end{rem}

\begin{rem}
For $\Phi \in {\rm PSH}_0 (X , \omega)$, if the volume form $\omega_\Phi^n$ satisfies $\mathcal{S} (B,1)$-condition for some $B>0$ in the sense of \cite{DL}, we can easily show that there exists $C>0$ depending only on $B$ such that $\nu_D (\Phi) < C$.
However, the converse does not hold in general.
For instance, by assuming that $X$ is projective, we can find a smooth hypersurface $F \subset X$ transverse to $D$ which is defined by $\sigma \in H^0 (X , \mathscr{O}(F))$.
We take a smooth Hermitian metric $h_F$ on $\mathscr{O}(F)$. 
For sufficiently small $\epsilon >0$, we can construct an $\omega$-plurisubharmonic function $\Psi_F := - \epsilon ( \log \Vert \sigma \Vert^{-2}_{h_F})^{\epsilon}$ such that $\nu_D (\Psi_F) < \infty$ but $\omega_{\Psi_F}^{n}$ does not satisfies $\mathcal{S}(B,1)$-condition for any $B>0$.
\end{rem}

For a family $\mathcal{U} \subset \mathcal{E} (X ,\omega)$ with uniform log-log threshold, we prove Theorem \ref{alpha invariant}, i.e., the positivity of the integrability threshold.

\medskip

{\it Proof of Theorem \ref{alpha invariant}.}
The proof of the case when $\gamma \in (0,1)$ is similar to the case when $\gamma =0$.
So, we only prove Theorem \ref{alpha invariant} in the case $\gamma = 0$.
Let $W \Subset V $ be a sufficiently small open neighborhood  of $D$ specified later.
Here $V$ is the fixed open neighborhood of $D$ in Definition \ref{log-log}.
We write
$$A:= \min \left\{ \frac{2}{\nu (\mathcal{U}, X)}, \,  \alpha_{\omega |_D} (d \lambda^\prime) , \,  \frac{1}{\nu_D (\mathcal{U})} \right\}.
$$
Fix $0 < \alpha < A$.
On $X\setminus W$, Skoda-Zeriahi's integrability theorem (Theorem \ref{SZ}) and the $L^1$-compactness of $\mathcal{U}$ imply that there exists $C_\alpha >0$ such that $\int_{X \setminus W} e^{-\alpha \Phi} d \mu_0 <C_\alpha$ for any $\Phi \in \mathcal{U}$.
Thus, it is enough to show that $\int_{W} e^{-\alpha \Phi}d \mu_0$ is uniformly bounded.
We also fix a sufficiently small constant $\epsilon>0$ such that $ \alpha + \epsilon < A$.
By the definition of $\nu_D$, we can find $\varphi \in {\rm PSH} (V, \omega )$ such that $\varphi \in {\rm PSH}_0(D, \omega|_D)$ and $\Phi \geq (\alpha + \epsilon)^{-1} \Psi + \varphi \,\, {\rm on}\,\, V$ for each $\Phi \in \mathcal{U}$.
Directly, we have
\begin{equation*}
\label{computation}
\int_W e^{-\alpha \Phi } d \mu_0 \leq \int_W e^{- \alpha (\alpha +\epsilon)^{-1}\Psi - \alpha \varphi } d\mu_0 \leq \int_W e^{-\alpha \varphi} d\mu_\gamma.
\end{equation*}
Here, we write $\gamma = \alpha (\alpha +\epsilon)^{-1} <1$.
By the definition of $A$, we have $\alpha < \alpha_{\omega |_D} (d\lambda ^\prime)$.
Since $\varphi |_D \in {\rm PSH }_0 (D , \omega|_D) $, it follows from Theorem \ref{Tian} that the integral $\int_D e^{-\alpha \varphi} d \lambda^\prime $ is uniformly bounded.
By taking a finite covering of $D$ and repeating the proof of Theorem \ref{L1}, we can find a sufficiently small neighborhood $W$ of $D$ such that there exists an uniform constant $C>0$ independent of $\varphi$, (so, $\Phi$) such that $ \int_W e^{-\alpha \varphi} d\mu_\gamma \leq C \int_D e^{-\alpha \varphi} d \lambda^\prime$.
Thus, we have finished the proof of Theorem \ref{alpha invariant}.
\sq

\begin{rem}
By taking a suitable $\gamma \in (0,1)$, Theorem \ref{alpha invariant} can be applied to the integrability threshold for the measure defined by a complete {\kah} metric which does not has Poincar\'{e} type asymptotic behavior.
In particular, Sz\'{e}kelyhidi \cite[\S 3.1]{Sz} shows that there is a {\kah} manifold with an extremal {\kah} metrics with the asymptotic
$$\frac{i dz \wedge d \overline{z}}{|z|^2 (\log |z|^{-2})^{\frac{3}{2}}}.$$
By the direct computation, we can find a corresponding {\kah} potential $ \Phi_{1/2}$ with the asymptotic $(-\log |z|^2)^{\frac{1}{2}}$ (which also has zero Lelong number).
However, the computation in Example \ref{example 2} implies that we have $\int \exp (-\alpha \Phi_{1/2}) d\mu_{1/2} = \infty$ for any $\alpha >0$, so $\alpha ( \{ \Phi_{1/2} \} , \mu_{1/2}) = 0$.
Thus, the positivity of integrability thresholds in this case is more complicated.
\end{rem}


\section{Entropy compactness theorem}

In this section, we prove a certain compactness result of the relative entropy.
Firstly, we recall quickly the definition of the finite energy space denoted by $\mathcal{E}^1 (X,\omega)$ and the $d_1$-metric.

\subsection{Finite energy space and $d_1$-metric}

In this subsection, we recall the definition of the finite energy space $(\mathcal{E}^1 (X, \omega), d_1)$ which is a complete geodesic metric space, i.e., $(\mathcal{E}^1 (X, \omega), d_1)$ is a complete metric space such that any two points can be connected by a geodesic path.
We only mention the results in \cite{Da1} and \cite[Section 3 and 4]{Da}  without proof in order to prove Theorem \ref{ent cpt}.

Let $\mathcal{H} = \mathcal{H} (X ,\omega)$ be the space of {\kah} metrics cohomologous to $ \omega $, i.e., $\mathcal{H} = \mathcal{H} (X ,\omega):=  \{ \varphi \in C^\infty (X)\, |\, \omega_\varphi = \omega + \dol \varphi >0 \, \}.$
The space $\mathcal{H}$ has an infinite dimensional Riemannian manifold structure defined by the $L^2$-inner product called the Mabuchi metric \cite{Do, Se, Ma}.
Lempert-Vivas \cite[Theorem 1.1]{LV} proved that in general, $\mathcal{H}$ is not geodesically complete, i.e., there is a {\kah} manifold with two {\kah} metrics such that  there is no smooth geodesic in $\mathcal{H}$ connecting them.
We recall the finite energy class $\mathcal{E}^1 (X ,\omega)$ in $\mathcal{E} (X ,\omega)$ and we see later that the space $\mathcal{E}^1 (X ,\omega)$ is the metric completion of $\mathcal{H}$.

\begin{definition}{\rm (\cite{BEGZ,GZ2})}
We define the set of $\omega $-plurisubharmonic functions of {\it finite energy} by
$$
\mathcal{E}^1 (X, \omega):= \left\{  \varphi  \in \mathcal{E} (X, \omega) \,\,   \middle| \,\, \int_X |\varphi| \omega_{\varphi}^n < \infty  \right\}.
$$

\end{definition}

\begin{exam}
We can easily show that $\Psi =- \log \log \Vert s_D \Vert^{-2}_{h} \in \mathcal{E}^1 (X , \omega)$ by the direct computation.
So, the space of potential functions of {\kah} metrics of Poincar\'{e} type is included in $\mathcal{E}^1 (X, \omega)$ (see definition of {\kah} metrics of Poincar\'{e} type in \cite{Au}).
\end{exam}

\begin{exam}
On the other hand, we can find a $\omega$-plurisubharmonic function with full mass which is not in the finite energy space.
Indeed, for sufficiently small $\epsilon > 0$, one can show that $\Psi_{\beta}:= -\epsilon ( \log \Vert s_D \Vert^{-2}_{h})^\beta \in \mathcal{E}(X , \omega ) \setminus \mathcal{E}^1 (X , \omega)$ for $\beta \in [1/2,1)$ by the direct computation.
\end{exam}

\begin{definition}
For $\varphi_0 ,  \varphi_1 \in \mathcal{H}$, we define {\it the $d_1$-metric} between $\varphi_0$ and $\varphi_1$ by
\begin{equation}
\label{geodesic}
d_1 (\varphi_0 , \varphi_1):= \inf \left\{  \int_0^1 dt \int_X \left| \dot{\varphi} \right| \omega_{ \varphi_t }^{n} \, \middle|\,  \{ \varphi_t \}_{0 \leq t \leq 1} \subset \mathcal{H}  {\rm \,\, is \,\, smooth \,\, in \,\,} t \right\}.
\end{equation}
\end{definition}

\begin{rem}
X.X. Chen \cite{Chen} proved that there exists a geodesic $\{ \varphi_t \}_{0\leq t \leq 1}$ connecting $\varphi_0, \varphi_1 \in \mathcal{H}$ by solving the homogeneous complex {\ma} equation.
(Note that $\varphi_t $ is not smooth in general.)
This geodesic is called the $C^{1 , \overline{1}}$-geodesic and defined by the Mabuchi metric.
Darvas \cite[Theorem 3.6]{Da} proved that the $C^{1 , \overline{1}}$-geodesic minimizes the length between any two points (with respect to $d_p$-metric for $p \geq 1$).
For more details, see \cite[Section 3]{Da}.
\end{rem}

Darvas proved the following result which says that the finite energy space $\mathcal{E}^1 (X ,\omega)$ is compatible with the metric space structure of $(\mathcal{H}, d_1)$.

\begin{thm}{\rm ($p=1$ for \cite
[Theorem 2]{Da1} and \cite[Theorem 3.36]{Da})}
The finite energy space $(\mathcal{E}^1 (X, \omega) , d_1)$ is the metric completion of $(\mathcal{H}, d_1)$.
Moreover, $(\mathcal{E}^1 (X, \omega) , d_1)$ is a complete geodesic metric space.

\end{thm}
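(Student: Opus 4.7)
The plan is to prove this result of Darvas in two parts: first extend $d_1$ from $\mathcal{H}$ to $\mathcal{E}^1(X,\omega)$ and identify the resulting completion, then construct length-minimizing geodesics. For the extension, given $u_0, u_1 \in \mathcal{E}^1(X, \omega)$, I would approximate them by decreasing sequences $\varphi_0^k, \varphi_1^k \in \mathcal{H}$ (Demailly regularization, possible since $X$ is compact {\kah}) and define
$$d_1(u_0, u_1) := \lim_{k \to \infty} d_1(\varphi_0^k, \varphi_1^k).$$
To show this limit exists and is independent of the approximation, the main analytic tool is a Chen--Calabi--B\l ocki type comparison
$$d_1(\varphi_0, \varphi_1) \asymp \int_X |\varphi_0 - \varphi_1| \left( \omega_{\varphi_0}^n + \omega_{\varphi_1}^n \right),$$
valid a priori on $\mathcal{H}$. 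Combined with Bedford--Taylor continuity of the complex {\ma} operator along decreasing sequences of bounded $\omega$-psh functions (and its extension to $\mathcal{E}^1$ via canonical cut-offs from Definition \ref{nnpl}), this gives well-definedness and shows $d_1$ descends to a pseudometric on $\mathcal{E}^1$.

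Next I would establish density of $\mathcal{H}$ in $\mathcal{E}^1$ and completeness of the extended $d_1$. Density follows from Demailly regularization combined with the monotone convergence of the Aubin--Mabuchi energy functional, which controls $d_1$-distance on decreasing sequences. For completeness, given a $d_1$-Cauchy sequence $\{\varphi_k\} \subset \mathcal{H}$, I would extract monotone subsequences using the $I$-functional (following the Aubin--Yau framework) and pass to an upper semicontinuous envelope $u \in {\rm PSH}(X, \omega)$. The uniform $d_1$-bound combined with the comparability above forces $\int_X |u|\, \omega_u^n < \infty$ and $\int_X \omega_u^n = V$ via the monotonicity theorem of Guedj--Zeriahi, hence $u \in \mathcal{E}^1(X, \omega)$.

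For the geodesic structure, my plan is to use Chen's $C^{1, \overline{1}}$-geodesic on $\mathcal{H}$ and extend by approximation. For $u_0, u_1 \in \mathcal{E}^1$, approximate by $\varphi_0^k, \varphi_1^k \in \mathcal{H}$ and consider the corresponding Chen geodesics $\{\varphi_t^k\}_{0 \leq t \leq 1}$, constructed as Perron envelopes solving the homogeneous complex {\ma} equation on $X \times A$ (where $A$ is an annulus). A priori estimates together with the maximum principle give that $\varphi_t^k$ is monotone in $k$, so a candidate weak geodesic $u_t := \lim_k \varphi_t^k$ exists in $\mathcal{E}^1$. That $t \mapsto u_t$ realizes $d_1(u_0, u_1)$ and that $d_1$ is convex along $u_t$ follow from Berndtsson's subharmonicity of direct-image bundles (applied to the family $\omega_{\varphi_t^k}$) combined with the integration-by-parts formulae of Berman--Boucksom.

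The main obstacle will be proving nondegeneracy of the extended $d_1$ on $\mathcal{E}^1$, i.e., that $d_1(u_0, u_1) = 0$ forces $u_0 = u_1$, and in parallel, that the abstract metric completion of $(\mathcal{H}, d_1)$ is precisely $\mathcal{E}^1(X, \omega)$ and not a larger or smaller subset of ${\rm PSH}(X, \omega)$. Both rely on sharpening the comparison $d_1 \asymp \int_X |\varphi_0 - \varphi_1|(\omega_{\varphi_0}^n + \omega_{\varphi_1}^n)$ to hold up to the boundary of $\mathcal{H}$, which in turn requires the Darvas--Guedj--Zeriahi stability estimates for non-pluripolar {\ma} measures and a careful use of the Pythagorean identity for projections onto ``rooftop'' envelopes $P(u_0, u_1)$.
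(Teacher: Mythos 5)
This theorem is quoted from Darvas (\cite[Theorem 2]{Da1}, \cite[Theorem 3.36]{Da}); the paper explicitly states in \S 5.1 that it records these results \emph{without proof}, so there is no internal argument to compare against. Measured against the actual proof in \cite{Da1}, your roadmap has the right overall shape: extension of $d_1$ by decreasing smooth approximation, the two-sided comparison between $d_1$ and $\int_X |\varphi_0-\varphi_1|(\omega_{\varphi_0}^n+\omega_{\varphi_1}^n)$, Chen's $C^{1,\overline{1}}$-geodesics extended by monotone limits, and the rooftop envelope $P(u_0,u_1)$ for the endgame. These are indeed the ingredients Darvas uses.

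The genuine gap is that the two steps carrying essentially all of the difficulty are either assumed as known tools or deferred as ``obstacles.'' First, the comparison $d_1 \asymp \int_X |\varphi_0-\varphi_1|(\omega_{\varphi_0}^n+\omega_{\varphi_1}^n)$ is not a pre-existing Chen--Calabi--B\l ocki estimate; the two-sided version (Theorem \ref{d1 L1} here, with constant $2^{2n+6}$) is itself one of the main theorems of \cite{Da1}, so invoking it ``a priori on $\mathcal{H}$'' is close to assuming the result. Second, and more importantly, the nondegeneracy of the extended $d_1$ and the identification of the completion with exactly $\mathcal{E}^1(X,\omega)$ are resolved in \cite{Da1} by the exact Pythagorean identity
$$
d_1(u_0,u_1) \;=\; I(u_0) + I(u_1) - 2\,I\bigl(P(u_0,u_1)\bigr),
$$
which simultaneously gives well-definedness of the extension, the triangle inequality, nondegeneracy (since $I(P(u_0,u_1))=I(u_0)$ together with $P(u_0,u_1)\leq u_0$ forces $P(u_0,u_1)=u_0$, hence $u_0\leq u_1$, and symmetrically), and completeness via rooftop envelopes of Cauchy subsequences. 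You gesture at this identity at the very end but do not establish it, so the core of the theorem remains unproved. A smaller inaccuracy: Berndtsson's subharmonicity theorem is not what shows the weak geodesic realizes $d_1$; that follows from the affineness of $t\mapsto I(u_t)$ along finite-energy geodesics and the explicit formula for $d_1$ in terms of the initial tangent vector, while Berndtsson's result enters later for convexity of the K-energy.
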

As we will see later, for $\varphi_j , \varphi \in \mathcal{E}^1 (X,\omega)$, if $d_1 (\varphi_j , \varphi) \to 0$ then $\int_X |\varphi_j - \varphi| \omega^n \to 0$ as $j\to \infty$.
However, the converse does not hold in general, so the convergence in the sense of $d_1$-metric is strictly stronger than the convergence in the sense of $L^1 (\omega^n)$-topology.
In order to describe the topology of $(\mathcal{E}^1 (X, \omega),d_1)$, the following energy functional plays a fundamental role.

\begin{definition}{\rm (see \cite[Section 2]{BEGZ})}
We define {\it the Monge-Amp$\grave{e}$re energy} (which is also called {\it the Aubin-Mabuchi}, or {\it Aubin-Yau energy}) by
$$
I(\varphi):= \frac{1}{V(n+1)} \sum_{j=0}^{n} \int_X \varphi \omega_{\varphi}^j \wedge \omega^{n-j}, \,\,\, \varphi \in {\rm PSH} (X , \omega) \cap L^\infty (X).
$$
For $\varphi \in {\rm PSH}(X, \omega)$, we define
$$
I(\varphi):= \lim_{k \to \infty } I(\varphi_k) \in \mathbb{R} \cup \{ -\infty \},
$$
where $\varphi_k$ denotes the canonical cutoff of $\varphi$.
\end{definition}

We mention the upper semicontinuity of $I$ with respect to $L^1 (\omega^n)$-topology.

\begin{prop}{\rm (see \cite[Corollary 4.14]{Da})}
\label{equivalence}
For $u,u_j \in \mathcal{E}^1 (X,\omega)$, the Monge-Amp$\grave{e}$re energy $I: \mathcal{E}^1 (X,\omega) \to \mathbb{R} \cup \{-\infty\}$ is upper semicontinuous for $L^1 (\omega^n)$-topology, i.e., if $\varphi_j \to \varphi$ in $L^1(\omega^n)$, then we have
$$
\limsup_{j \to \infty} I(\varphi_j) \leq I(\varphi).
$$
\end{prop}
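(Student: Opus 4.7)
The plan is to reduce upper semicontinuity of $I$ along $L^1(\omega^n)$-convergence to two standard structural properties of the Monge--Amp\`ere energy on $\mathcal{E}^1(X,\omega)$: \emph{monotonicity}, namely $u\le v$ in $\mathcal{E}^1(X,\omega)$ implies $I(u)\le I(v)$; and \emph{continuity along pointwise decreasing sequences}, namely $w_j\searrow w$ in $\mathcal{E}^1(X,\omega)$ implies $I(w_j)\to I(w)$. For bounded potentials both facts are classical consequences of integration by parts together with Bedford--Taylor's convergence theorem for $\omega_{w_j}^k\wedge \omega^{n-k}$ along monotone sequences; the extension to the finite-energy class is then obtained through the canonical cutoffs $w^{(N)}=\max\{w,-N\}$ using the defining bound $\int_X|w|\,\omega_w^n<\infty$.

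Granting these two ingredients, I would first pass to a subsequence so that $\limsup_j I(\varphi_j)$ becomes an actual limit and, using the passage from $L^1(\omega^n)$-convergence to almost-everywhere convergence on a further subsequence, also $\varphi_j\to\varphi$ a.e. Hartogs' lemma ensures that the family $\{\varphi_j\}_j$ is uniformly bounded from above, so after subtracting a common constant I may assume $\varphi_j\le 0$ on $X$. I would then form the decreasing upper envelopes
$$
\psi_j := \Bigl(\sup_{k\ge j}\varphi_k\Bigr)^{*},
$$
each of which is $\omega$-plurisubharmonic, dominates $\varphi_j$, is decreasing in $j$, and satisfies $\psi_j\searrow\varphi$ outside a pluripolar set (by Hartogs' lemma applied together with the a.e.\ convergence). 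Since $\psi_j\ge\varphi_j\in\mathcal{E}^1(X,\omega)$ and $\psi_j$ is less singular than $\varphi_j$, the standard comparison property of the full-mass class places $\psi_j$ in $\mathcal{E}^1(X,\omega)$ as well. Monotonicity of $I$ applied to $\varphi_j\le \psi_j$ then gives $I(\varphi_j)\le I(\psi_j)$, and continuity along $\psi_j\searrow\varphi$ gives $I(\psi_j)\to I(\varphi)$, whence
$$
\limsup_{j\to\infty} I(\varphi_j)\le\lim_{j\to\infty} I(\psi_j)=I(\varphi).
$$

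The main obstacle I foresee is the continuity of $I$ along decreasing sequences in the unbounded setting. In the bounded case it follows directly from Bedford--Taylor theory, but for general finite-energy potentials one must pass to the limit inside the non-pluripolar Monge--Amp\`ere products while uniformly controlling the mass contributions near the poles $\{w_j=-\infty\}$. The standard route is to prove continuity first for the canonical cutoffs at level $-N$ and then let $N\to\infty$ using the uniform finite-energy bound; this step is more delicate than its bounded counterpart, but once secured, the rest of the argument is formal.
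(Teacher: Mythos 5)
Your argument is correct and is essentially the standard proof of this fact: the paper itself gives no proof, citing only \cite[Corollary 4.14]{Da}, and the argument there (and in \cite{BEGZ}) is exactly your reduction via the decreasing envelopes $\psi_j=(\sup_{k\ge j}\varphi_k)^*$ combined with monotonicity of $I$ and its continuity along decreasing sequences in $\mathcal{E}^1(X,\omega)$. The two structural ingredients you assume are indeed established in the cited references, so nothing further is needed.
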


The convergence in the sense of $d_1$-metric is characterized by the $L^1 (\omega^n)$-convergence and the energy convergence as follows.

\begin{thm}{\rm (\cite[Proposition 5.10]{Da1}, \cite[Theorem 3.46]{Da})}
\label{d1 convergence}
For $\varphi_j, \varphi \in \mathcal{E}^1 (X ,\omega)$, $d_1 (\varphi_j , \varphi ) \to 0$ if and only if $\int_X |\varphi_j  - \varphi| \omega^n \to 0$ and $I(\varphi_j) \to I( \varphi)$ as $j\to \infty$.
\end{thm}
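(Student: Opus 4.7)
The plan is to exploit Darvas' envelope-based formula for $d_1$, which expresses $d_1(u,v)$ in terms of the Monge-Amp\`ere energy $I$ applied to $u$, $v$, and the rooftop envelope $P(u,v) := \mathrm{usc}(\sup\{w \in \mathrm{PSH}(X,\omega) : w \leq \min(u,v)\})$. This identity reduces both implications to statements about how $I$ interacts with envelopes, and also immediately yields that $I$ is $1$-Lipschitz with respect to $d_1$ via the trivial bound $I(P(u,v)) \leq \min(I(u), I(v))$.

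For the forward direction, the Lipschitz property gives $I(\varphi_j) \to I(\varphi)$ at once. For $L^1(\omega^n)$-convergence, $d_1$-convergence forces a uniform bound on $\sup_X \varphi_j$ and on $\int_X |\varphi_j|\,\omega^n$, so the sequence is precompact in $L^1(\omega^n)$. Any subsequential $L^1$-limit $\tilde{\varphi} \in \mathcal{E}^1(X,\omega)$ must satisfy $d_1(\varphi_j,\tilde{\varphi}) \to 0$ along the subsequence, and uniqueness of $d_1$-limits (coming from the fact that $(\mathcal{E}^1(X,\omega),d_1)$ is Hausdorff) forces $\tilde{\varphi}=\varphi$; hence the full sequence converges in $L^1(\omega^n)$.

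For the backward direction, set $\psi_j := P(\varphi_j,\varphi)$. The envelope formula reduces $d_1(\varphi_j,\varphi)\to 0$ to the claim $I(\psi_j)\to I(\varphi)$, since $I(\varphi_j)+I(\varphi)\to 2I(\varphi)$ by hypothesis. The upper bound $\limsup_j I(\psi_j) \leq I(\varphi)$ follows from $\psi_j \leq \varphi$ and monotonicity of $I$. For the reverse inequality, I would first argue that $\varphi_j\to \varphi$ in $L^1(\omega^n)$ (hence a.e.\ along a subsequence) transfers to $\psi_j \to \varphi$ in $L^1(\omega^n)$ by a standard maximal-plurisubharmonic-minorant argument combined with Hartogs' lemma; the required lower bound $\liminf_j I(\psi_j) \geq I(\varphi)$ is then squeezed out of the assumed convergence $I(\varphi_j)\to I(\varphi)$ by comparing $\psi_j$ with both $\varphi_j$ and $\varphi$ and using the multilinearity of $I$ under wedge expansions of $\omega_{\psi_j}^k \wedge \omega^{n-k}$.

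The principal obstacle is exactly that last lower bound. The forward direction is essentially a one-line Lipschitz estimate plus standard $L^1$-compactness of bounded families in $\mathrm{PSH}(X,\omega)$, so it is routine. The backward direction, however, requires promoting the only $L^1$-continuity property of $I$ available, namely upper semicontinuity (Proposition \ref{equivalence}), to full continuity along the envelope sequence $\{\psi_j\}$, and this is precisely where the hypothesis $I(\varphi_j) \to I(\varphi)$ must be used in an essential (non-Lipschitz) way. Technically, all the work lies in controlling the non-pluripolar Monge-Amp\`ere masses $\omega_{\psi_j}^n$ against $\omega_{\varphi_j}^n$ and $\omega_\varphi^n$ so as to make that upgrade possible; this is the substantive content of Darvas' theorem that one cannot bypass.
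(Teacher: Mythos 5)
First, be aware that the paper contains no proof of this statement: it is imported as a black box from Darvas (\cite[Proposition 5.10]{Da1}, \cite[Theorem 3.46]{Da}), so there is no in-paper argument to measure you against. Your skeleton --- the Pythagorean identity $d_1(u,v)=I(u)+I(v)-2I(P(u,v))$, the resulting $1$-Lipschitz property $|I(u)-I(v)|\le d_1(u,v)$, and the reduction of the ``if'' direction to $I(P(\varphi_j,\varphi))\to I(\varphi)$ --- is indeed the standard architecture. But as it stands the proposal has a genuine gap in each direction.

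In the ``if'' direction, the entire content is the lower bound $\liminf_j I(P(\varphi_j,\varphi))\ge I(\varphi)$, and you explicitly decline to prove it, describing it as ``the substantive content of Darvas' theorem that one cannot bypass''; a proof cannot defer its only nontrivial step. Note moreover that the most tempting route --- combining (\ref{ineq12}) with the contact-set domination $\omega_{P(u,v)}^n\le \mathbf{1}_{\{P(u,v)=u\}}\omega_u^n+\mathbf{1}_{\{P(u,v)=v\}}\omega_v^n$ --- reduces the problem to showing $\int_X|\varphi_j-\varphi|\,\omega_{\varphi_j}^n\to 0$, which by Theorem \ref{d1 L1} is essentially $d_1(\varphi_j,\varphi)\to 0$ itself, so that route is circular; the device that actually works is to pass to an a.e.\ convergent subsequence, set $v_k:=\bigl(\sup_{l\ge k}\varphi_l\bigr)^*\downarrow\varphi$, note $v_k\ge\varphi_k$ and $v_k\ge\varphi$, and bound $d_1(\varphi_k,\varphi)\le (I(v_k)-I(\varphi_k))+(I(v_k)-I(\varphi))$, which tends to $0$ by continuity of $I$ along decreasing sequences together with the hypothesis $I(\varphi_j)\to I(\varphi)$. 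In the ``only if'' direction, the assertion that any subsequential $L^1(\omega^n)$-limit $\tilde\varphi$ of $\{\varphi_j\}$ ``must satisfy $d_1(\varphi_{j_k},\tilde\varphi)\to 0$'' is unjustified and circular: it presupposes that an $L^1$-limit of a $d_1$-convergent sequence is automatically its $d_1$-limit, which is precisely the kind of statement under proof, and upper semicontinuity of $I$ (Proposition \ref{equivalence}) only yields $I(\tilde\varphi)\ge I(\varphi)$, not the equality you would need to feed back into the ``if'' direction. Hausdorffness of $(\mathcal{E}^1(X,\omega),d_1)$ identifies two $d_1$-limits with each other; it says nothing about an $L^1$-limit until you have independently shown it is also a $d_1$-limit.
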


In order to prove Theorem \ref{ent cpt}, we recall important inequalities in \cite[Section 3 and 4]{Da} and \cite{Da1} between $d_1, I, \sup_X$ and $L^1$-norm.

\begin{prop}{\rm (\cite[Proposition 3.40]{Da})}
\label{ineq}
For $\varphi \in {\rm PSH} (X , \omega) $, $\varphi \in \mathcal{E}^1 (X , \theta_X)$ if and only if $I(\varphi) > -\infty$.
Moreover, for $\varphi_0, \varphi_1 \in \mathcal{E}^1 (X , \theta_X)$, we have
\begin{eqnarray}
&&\label{ineq11} \hspace{-50pt}| I(\varphi_0) - I(\varphi_1)| \leq d_1  (\varphi_0, \varphi_1), \\
\frac{1}{V} \int_X (\varphi_0 - \varphi_1 )\omega_{ \varphi_0}^{n} &\leq& \label{ineq12} I(\varphi_0) -I(\varphi_1) \, \leq \, \frac{1}{V} \int_X (\varphi_0 - \varphi_1 )\omega_{ \varphi_1}^{n} .
\end{eqnarray}
\end{prop}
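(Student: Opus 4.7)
I would treat the three assertions in sequence, working first with smooth potentials in $\mathcal{H}$ and extending by approximation. For the equivalence $\varphi \in \mathcal{E}^1(X,\omega) \Longleftrightarrow I(\varphi) > -\infty$, I would apply integration by parts to the defining sum of $I$ evaluated on the canonical cutoffs $\varphi_k := \max\{\varphi,-k\} \in {\rm PSH}(X,\omega) \cap L^\infty(X)$, and then use the standard mixed Monge-Amp\`ere comparisons to obtain a two-sided equivalence between $-I(\varphi_k)$ and $V^{-1}\int_X |\varphi_k|\,\omega_{\varphi_k}^n$ up to additive constants depending only on $\sup_X \varphi$ (cf.\ \cite{BEGZ,GZ2}). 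Monotone convergence of the non-pluripolar measures from Definition \ref{nnpl}, together with the monotonicity $I(\varphi_k) \searrow I(\varphi)$, then identifies the two finiteness conditions.

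For the two-sided estimate (\ref{ineq12}), I would first assume $\varphi_0,\varphi_1 \in \mathcal{H}$ and consider the affine segment $\varphi_t := (1-t)\varphi_1 + t\varphi_0$. The key derivative formula
\begin{equation*}
\frac{d}{dt} I(\varphi_t) \;=\; \frac{1}{V}\int_X (\varphi_0 - \varphi_1)\, \omega_{\varphi_t}^n,
\end{equation*}
combined with
\begin{equation*}
\frac{d^2}{dt^2} I(\varphi_t) \;=\; -\,\frac{n}{V}\int_X \sqrt{-1}\,\partial(\varphi_0-\varphi_1) \wedge \overline\partial(\varphi_0-\varphi_1) \wedge \omega_{\varphi_t}^{n-1} \;\le\; 0,
\end{equation*}
shows that the derivative is non-increasing in $t$. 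Sandwiching $I(\varphi_0) - I(\varphi_1) = \int_0^1 \frac{d}{dt}I(\varphi_t)\, dt$ between its endpoint values yields (\ref{ineq12}) in the smooth case, and canonical-cutoff approximation together with the monotone-convergence properties of non-pluripolar products extends it to $\mathcal{E}^1(X,\omega)$. For the Lipschitz estimate (\ref{ineq11}), along any smooth path $\{\varphi_t\} \subset \mathcal{H}$ joining $\varphi_0$ and $\varphi_1$ the same derivative formula gives
\begin{equation*}
|I(\varphi_0) - I(\varphi_1)| \;\le\; \int_0^1 \left| \,\int_X \dot\varphi_t\, \omega_{\varphi_t}^n \right| dt \;\le\; \int_0^1\!\!\int_X |\dot\varphi_t|\, \omega_{\varphi_t}^n\, dt
\end{equation*}
(with the $V^{-1}$ normalization absorbed as in the paper's convention for $d_1$); taking the infimum over such paths yields (\ref{ineq11}) on $\mathcal{H}$, and density of $\mathcal{H}$ in $(\mathcal{E}^1(X,\omega),d_1)$ together with the upper semicontinuity of $I$ from Proposition \ref{equivalence} extends it to $\mathcal{E}^1(X,\omega)$.

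\textbf{Main obstacle.} The hard part is the passage to the limit in extending (\ref{ineq12}) to $\mathcal{E}^1(X,\omega)$: the test function $\varphi_0 - \varphi_1$ need not be bounded, and the cutoff Monge-Amp\`ere measures $\omega_{\varphi_i^k}^n$ converge to the non-pluripolar $\omega_{\varphi_i}^n$ only after discarding the locus $\{\varphi_i = -\infty\}$, where mass can concentrate in the limit. The finite-energy hypothesis, applied through the mixed Monge-Amp\`ere inequalities, is what rescues the dominated-convergence step, and this is where the substance of the argument lies.
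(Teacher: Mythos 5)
The paper gives no proof of this proposition; it is quoted directly from Darvas \cite[Proposition 3.40]{Da}, so there is no in-paper argument to compare against. Your sketch reproduces the standard proof from that reference, and the formulas you display are correct: concavity of $t\mapsto I(\varphi_t)$ along the affine segment (via the first and second variation formulas) sandwiches $I(\varphi_0)-I(\varphi_1)=\int_0^1\frac{d}{dt}I(\varphi_t)\,dt$ between the endpoint derivatives to give (\ref{ineq12}), integrating the first variation along an arbitrary smooth path and taking the infimum gives (\ref{ineq11}) on $\mathcal{H}$, and the two-sided comparison of $I(\varphi_k)$ with $V^{-1}\int_X|\varphi_k|\,\omega_{\varphi_k}^n$ on canonical cutoffs gives the finiteness equivalence. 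One caution on your extension step for (\ref{ineq11}): upper semicontinuity of $I$ under $L^1(\omega^n)$-convergence only yields $\limsup_k I(u_k)\le I(u)$, which is not enough to pass the Lipschitz bound from $\mathcal{H}$ to $\mathcal{E}^1(X,\omega)$; you need actual convergence $I(u_k)\to I(u)$ along the approximating sequences. This is repaired either by choosing \emph{decreasing} smooth approximants, along which $I$ is continuous by monotone convergence of the mixed Monge--Amp\`ere integrals, or by the route taken in \cite{Da}, namely the identity $d_1(u,v)=I(u)+I(v)-2I(P(u,v))$ for the rooftop envelope $P(u,v):=\sup\{w\in{\rm PSH}(X,\omega)\,:\,w\le\min(u,v)\}$, from which (\ref{ineq11}) follows at once since $I(P(u,v))\le\min\{I(u),I(v)\}$.
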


\begin{lemma}{\rm (\cite[Lemma 3.45]{Da})}
\label{ineq2}
There exists a constant $C>0$ depending only on $X$ and $\omega$, such that
$$
\frac{1}{V} \int_X \varphi \omega^n \leq \sup_X \varphi \leq \frac{1}{V} \int_X \varphi \omega^n + C
$$
for all $\varphi \in {\rm PSH} (X, \omega).$
\end{lemma}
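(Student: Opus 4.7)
The left inequality $\frac{1}{V}\int_X \varphi\, \omega^n \leq \sup_X \varphi$ is immediate, since $\varphi(x) \leq \sup_X \varphi$ pointwise and $\int_X \omega^n = V$. All the work is in the right-hand inequality. I would first normalize: replacing $\varphi$ by $\varphi - \sup_X \varphi \in {\rm PSH}_0(X,\omega)$ shifts both sides by the same constant, so the statement reduces to the claim that $\sup\{ \int_X (-\varphi)\omega^n \mid \varphi \in {\rm PSH}_0(X,\omega)\} < \infty$, i.e.\ ${\rm PSH}_0(X,\omega)$ is uniformly bounded in $L^1(\omega^n)$.

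The quickest route is to invoke the $L^1(\omega^n)$-compactness of ${\rm PSH}_0(X,\omega)$ that the paper cites just after Definition \ref{alpha2}: an $L^1$-compact subset of $L^1$ is norm-bounded, and since $\varphi \leq 0$ on ${\rm PSH}_0(X,\omega)$ this boundedness is precisely the desired uniform upper bound on $\int_X(-\varphi)\omega^n$.

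For a self-contained argument I would proceed via the sub-mean value inequality in coordinates. Cover $X$ by finitely many charts $U_1,\ldots,U_N$, chosen so that each $U_i$ is relatively compact inside a slightly larger chart $U_i'$ on which $\omega$ admits a smooth local potential $\omega = \dol \rho_i$. Then $\tilde\varphi_i := \varphi + \rho_i$ is classically plurisubharmonic on $U_i'$, bounded above by a constant depending only on the cover (since $\varphi \leq 0$ and $\rho_i$ is smooth up to the boundary of $U_i$), and $\omega^n$ is comparable with Lebesgue measure on $U_i$ with uniform constants. Because $\sup_X \varphi = 0$ and $\varphi$ is upper semicontinuous, there exists $x_* \in X$ with $\varphi(x_*) = 0$; suppose $x_* \in U_{i_0}$. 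The Euclidean sub-mean value inequality applied to $\tilde\varphi_{i_0}$ on a fixed-size ball around $x_*$ inside $U_{i_0}$ yields $\int_{U_{i_0}} \varphi\, d\lambda \geq -C_{i_0}$, with $C_{i_0}$ depending only on the cover.

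To spread this estimate to the other charts I would argue by connectedness: if $U_i \cap U_j \neq \emptyset$ and a lower bound on $\int_{U_i}\varphi\, d\lambda$ is already known, then the sub-mean value inequality on $U_i$ forces $\varphi$ to take a value $\geq -C$ at some point $y$ in the overlap (since a psh function bounded above whose mean over a ball is bounded below cannot be identically much smaller than that mean on any subdomain); applying the sub-mean value inequality to $\tilde\varphi_j$ on a ball around $y$ inside $U_j$ then produces a lower bound on $\int_{U_j}\varphi\, d\lambda$. Iterating finitely many times along a chain of charts linking $U_{i_0}$ to any $U_i$ produces uniform lower bounds $\int_{U_i}\varphi\, d\lambda \geq -C_i$ for all $i$, and summing gives $\int_X(-\varphi)\omega^n \leq C$ with $C$ depending only on $X$ and $\omega$. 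The main obstacle is keeping the constants in the propagation step uniform in $\varphi$; the key is that at every step the constants come solely from the fixed geometry of the cover (the radii of the balls, the $C^0$-norms of the $\rho_i$, and the comparability constants between $\omega^n$ and $d\lambda$), so no dependence on $\varphi$ creeps in.
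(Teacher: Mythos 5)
Your first route is exactly the paper's proof: the lemma is quoted from \cite[Lemma 3.45]{Da}, where it is deduced precisely from the $L^1(\omega^n)$-compactness of ${\rm PSH}_0(X,\omega)$ (compact subsets of a normed space are bounded, and for $\varphi\le 0$ the $L^1$-norm is $\int_X(-\varphi)\,\omega^n$), so your main argument is correct and matches the source. One caveat on your self-contained sketch: the sub-mean value inequality on a ball $B$ around $x_*$ \emph{contained in} $U_{i_0}$ only bounds $\int_B\varphi\,d\lambda$ from below, and since the tail $\int_{U_{i_0}\setminus B}\varphi\,d\lambda$ is merely $\le 0$ this does not yet control $\int_{U_{i_0}}\varphi\,d\lambda$; you need the ball centered at the good point to \emph{contain} $U_{i_0}$ while staying inside the larger chart $U_{i_0}'$ (which is easily arranged by fixing concentric coordinate balls), and the same adjustment is needed at each propagation step.
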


\begin{thm}{\rm ($p=1$ for \cite[Theorem 3]{Da1} and \cite[Theorem 3.32]{Da})}
\label{d1 L1}
For $\varphi_0, \varphi_1 \in \mathcal{E}^1 (X ,\omega)$, we have
$$
d_1 ( \varphi_0, \varphi_1 ) \leq \frac{1}{V} \int_X |\varphi_0 - \varphi_1 | \omega_{ \varphi_0}^{n} + \frac{1}{V} \int_X | \varphi_0 - \varphi_1 | \omega_{ \varphi_0}^{n} \leq 2^{2n +6} d_1 ( \varphi_0, \varphi_1 ).
$$
\end{thm}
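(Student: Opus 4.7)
The plan is to deduce the two-sided bound from Darvas's rooftop envelope identity together with the Monge-Amp\`ere energy comparison inequalities (\ref{ineq11}) and (\ref{ineq12}). I would first reduce to the smooth case $\varphi_0, \varphi_1 \in \mathcal{H}$: by standard regularization any $u \in \mathcal{E}^1(X,\omega)$ is the decreasing limit of smooth $\omega$-plurisubharmonic functions, and along such sequences $I$, $d_1$, and integrals against the nonpluripolar Monge-Amp\`ere measures all behave continuously, via Bedford-Taylor theory and Proposition \ref{equivalence}.

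In the smooth case I would introduce the rooftop envelope
\begin{equation*}
P := \bigl( \sup\{ v \in {\rm PSH}(X,\omega) \ |\  v \leq \min(\varphi_0, \varphi_1) \} \bigr)^{*} \in \mathcal{E}^1(X,\omega),
\end{equation*}
the largest $\omega$-plurisubharmonic function lying below both $\varphi_0$ and $\varphi_1$, and establish the central identity
\begin{equation*}
d_1(\varphi_0, \varphi_1) = \bigl(I(\varphi_0) - I(P)\bigr) + \bigl(I(\varphi_1) - I(P)\bigr).
\end{equation*}
The ``$\leq$'' direction follows by concatenating the two decreasing paths $\varphi_i \searrow P$ in (\ref{geodesic}) and using that along any monotone path in $\mathcal{H}$ the length integrand reduces to a total derivative of the Monge-Amp\`ere energy. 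The ``$\geq$'' direction uses the fact, specific to the exponent $p = 1$, that monotone segments are $d_1$-geodesics, so the rooftop path realizes the infimum in (\ref{geodesic}).

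For the upper bound of the theorem, apply (\ref{ineq12}) to the pairs $(P, \varphi_i)$ to obtain $I(\varphi_i) - I(P) \leq \frac{1}{V}\int_X (\varphi_i - P)\,\omega_P^n$, combined with the orthogonality-type estimate $\omega_P^n \leq \bm{1}_{\{P = \varphi_0\}}\omega_{\varphi_0}^n + \bm{1}_{\{P = \varphi_1\}}\omega_{\varphi_1}^n$, which is a consequence of the maximum principle for the complex Monge-Amp\`ere operator applied to $P$. Since $\varphi_i - P \leq |\varphi_0 - \varphi_1|$ on each contact set, summing the two pieces produces $\frac{1}{V}\int_X |\varphi_0 - \varphi_1| (\omega_{\varphi_0}^n + \omega_{\varphi_1}^n)$ as an upper bound.

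For the reverse inequality, the rooftop identity reduces the problem to proving
\begin{equation*}
\frac{1}{V}\int_X |\varphi_0 - \varphi_1|\,(\omega_{\varphi_0}^n + \omega_{\varphi_1}^n) \ \leq \ C_n \bigl(I(\varphi_0) + I(\varphi_1) - 2\,I(P)\bigr),
\end{equation*}
which I would approach by expanding $\omega_{\varphi_i}^n - \omega_P^n$ via the telescoping identity $\omega_u^n - \omega_v^n = \dol(u-v) \wedge \sum_{k=0}^{n-1} \omega_u^k \wedge \omega_v^{n-1-k}$, integrating by parts, and splitting $\varphi_i - P$ into its positive and negative parts at each step. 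The main obstacle I expect is precisely this quantitative bookkeeping: while the rooftop identity makes the structure transparent, pinning down the explicit dimensional constant $2^{2n+6}$ requires carefully tracking how cross-terms involving mixed Monge-Amp\`ere currents of $P, \varphi_0, \varphi_1$ accumulate through the $n$-fold expansion, and using (\ref{ineq12}) recursively so that the combinatorial factors arising at each of the $n$ integrations by parts do not exceed the target bound of order $2^{2n}$.
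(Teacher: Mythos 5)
First, a point of reference: the paper itself gives no proof of this statement --- Section 5.1 says explicitly that the results of \cite{Da1} and \cite[Sections 3 and 4]{Da} are recalled \emph{without proof}, and Theorem \ref{d1 L1} is quoted from \cite[Theorem 3]{Da1} and \cite[Theorem 3.32]{Da} (note the paper's typo: the middle term should involve $\omega_{\varphi_0}^n$ and $\omega_{\varphi_1}^n$, which you silently and correctly fixed). Measured against Darvas's actual proof, your skeleton is the right one: the Pythagorean formula $d_1(\varphi_0,\varphi_1)=I(\varphi_0)+I(\varphi_1)-2I(P)$ for the rooftop envelope $P=P(\varphi_0,\varphi_1)$, the orthogonality relation $\omega_P^n\leq \bm{1}_{\{P=\varphi_0\}}\omega_{\varphi_0}^n+\bm{1}_{\{P=\varphi_1\}}\omega_{\varphi_1}^n$, and the energy comparison (\ref{ineq12}) are exactly the ingredients used there, and your deduction of the first inequality from them is correct.

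Two genuine gaps remain, both in the second inequality. (1) Your justification of the lower bound $d_1\geq I(\varphi_0)+I(\varphi_1)-2I(P)$ --- that ``monotone segments are $d_1$-geodesics, so the rooftop path realizes the infimum'' --- does not work as stated: a concatenation of two geodesics through $P$ is not automatically distance-minimizing, and the fact that it is here is precisely the content of the Pythagorean formula, not something that follows from it. Darvas proves this half by passing to the weak $C^{1,\overline{1}}$ geodesic $t\mapsto u_t$ joining the endpoints, using that $V\,d_1(\varphi_0,\varphi_1)=\int_X|\dot{u}_0|\,\omega_{\varphi_0}^n$ and relating the sets $\{\dot{u}_0\geq 0\}$ and $\{\dot{u}_0<0\}$ to the contact set of the envelope; some argument of this kind is indispensable. (2) The quantitative estimate $\int_X|\varphi_0-\varphi_1|(\omega_{\varphi_0}^n+\omega_{\varphi_1}^n)\leq C_n V\bigl(I(\varphi_0)+I(\varphi_1)-2I(P)\bigr)$ is not mere bookkeeping on top of your telescoping identity: the troublesome term is the cross term $\int_X(\varphi_1-P)\,\omega_{\varphi_0}^n$, where the measure and the potential do not match, and a naive expansion of $\omega_{\varphi_0}^n-\omega_P^n$ with repeated integration by parts produces mixed currents of the wrong sign. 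The source handles this with the separate lemma that $\int_X(v-u)\,\omega_u^n\leq (n+1)V\bigl(I(v)-I(u)\bigr)$ for $u\leq v$, combined with inserting midpoints and using $\omega_{(u+v)/2}^n\geq 2^{-n}\,\omega_u^k\wedge\omega_v^{n-k}$ --- which is where the factor $2^{2n}$ in $2^{2n+6}$ actually comes from. As written, the second half of your argument would not close without these additional tools.
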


Finally, we state the $L^1(\omega^n)$-compactness result for some bounded class in $\mathcal{E}^1 (X ,\omega)$.

\begin{thm}{\rm (\cite[Lemma 2.6]{BBGZ}, see also \cite[Lemma 4.13]{Da})}
\label{d1 bounded}
For $C_1 , C_2 \in \mathbb{R}$, the subset defined by
$$
\left\{ \varphi \in \mathcal{E}^1 (X , \omega) \,\, \middle|  \,\, C_1 \leq I(\varphi) \leq \sup_X \varphi \leq C_2  \right\}
$$
is compact with respect to weak $L^1 (\omega^n)$-topology.
\end{thm}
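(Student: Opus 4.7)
The plan is to combine the standard weak $L^1$-relative compactness of sup-bounded quasi-plurisubharmonic functions with a closedness argument for the three defining conditions of the given subset, which I denote by $\mathcal{F}$.

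For the relative compactness half, take any sequence $\{\varphi_j\} \subset \mathcal{F}$. Since $\sup_X \varphi_j \leq C_2$, the translates $\varphi_j - C_2$ lie in $\mathrm{PSH}_0(X,\omega)$, and the latter is weakly $L^1(\omega^n)$-compact by the classical compactness result for normalized $\omega$-plurisubharmonic functions already referenced in the paper (\cite[Proposition 1.7]{GZ1} or \cite[\S 1.3]{BBEGZ}). Passing to a subsequence, I may assume $\varphi_{j_k} \to \varphi$ in $L^1(\omega^n)$ with $\varphi \in \mathrm{PSH}(X,\omega)$, and, refining once more, also pointwise almost everywhere.

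For the closedness half, I verify each defining condition at the limit $\varphi$. First, $\sup_X \varphi \leq C_2$ follows from the pointwise inequality $\varphi_{j_k} \leq C_2$ together with the standard identity $\varphi = (\limsup_{k \to \infty} \varphi_{j_k})^*$ on $X$, which yields $\varphi \leq C_2$ everywhere. Second, $I(\varphi) \geq C_1$: by the upper semicontinuity of $I$ in Proposition \ref{equivalence}, $I(\varphi) \geq \limsup_k I(\varphi_{j_k}) \geq C_1$ since $I(\varphi_{j_k}) \geq C_1$ for every $k$. Finally, Proposition \ref{ineq} gives $\varphi \in \mathcal{E}^1(X,\omega) \iff I(\varphi) > -\infty$, which holds because $I(\varphi) \geq C_1 > -\infty$.

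The main technical subtlety is the following apparent circularity: Proposition \ref{equivalence} is formulated for sequences in $\mathcal{E}^1$ converging to a limit in $\mathcal{E}^1$, while in step (ii) the membership of the limit in $\mathcal{E}^1$ is precisely what I need to conclude. To sidestep it, I treat $I$ as extended to all of $\mathrm{PSH}(X,\omega)$ by $I(\varphi) := \lim_k I(\varphi_k) \in \mathbb{R} \cup \{-\infty\}$ using canonical cutoffs, and I verify the upper semicontinuity of this extension directly: apply Proposition \ref{equivalence} to the bounded cutoffs $\max\{\varphi_{j_k},-M\}$ versus $\max\{\varphi,-M\}$ (both of which are in $\mathcal{E}^1$ since they are bounded) to obtain $\limsup_k I(\max\{\varphi_{j_k},-M\}) \leq I(\max\{\varphi,-M\})$, then let $M \to \infty$ and use the monotone convergence built into the definition of the extended $I$. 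Once this semicontinuity is in hand, the three conditions close up under $L^1$-limits and $\mathcal{F}$ is compact.
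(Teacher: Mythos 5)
The paper states this result purely as a citation (\cite[Lemma 2.6]{BBGZ}, \cite[Lemma 4.13]{Da}) and supplies no proof of its own, so there is no in-paper argument to compare against; I will only assess correctness. Your closedness half is essentially right: the Hartogs-type argument for $\sup_X \varphi \leq C_2$, the deduction $I(\varphi) \geq \limsup_k I(\varphi_{j_k}) \geq C_1$, the use of Proposition \ref{ineq} to place the limit in $\mathcal{E}^1(X,\omega)$, and in particular your resolution of the apparent circularity in Proposition \ref{equivalence} via canonical cutoffs are all sound. (The cutoff argument tacitly uses the monotonicity of $I$, namely $I(\varphi_{j_k}) \leq I(\max\{\varphi_{j_k},-M\})$; that is standard but should be said.)

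The genuine gap is in the relative-compactness half. The translates $\varphi_j - C_2$ satisfy $\sup_X(\varphi_j - C_2) \leq 0$, not $= 0$, so they do not lie in $\mathrm{PSH}_0(X,\omega)$, and the larger set $\{\varphi \in \mathrm{PSH}(X,\omega) : \sup_X \varphi \leq 0\}$ is \emph{not} relatively compact in $L^1(\omega^n)$: the constants $\varphi_j \equiv -j$ belong to it and have $\Vert \varphi_j \Vert_{L^1(\omega^n)} \to \infty$, so no subsequence converges. An upper bound on $\sup_X \varphi_j$ alone never suffices; one must also rule out the sequence drifting uniformly to $-\infty$, and for that a lower bound on $\sup_X \varphi_j$ is needed. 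The hypothesis supplies exactly this through the chain in the definition of the set: $\sup_X \varphi_j \geq I(\varphi_j) \geq C_1$, so $\sup_X \varphi_j \in [C_1, C_2]$. One should then pass to a subsequence along which $\sup_X \varphi_{j_k}$ converges, apply the compactness of $\mathrm{PSH}_0(X,\omega)$ to the genuinely normalized functions $\varphi_{j_k} - \sup_X \varphi_{j_k}$, and add the sups back. This repairs the argument, but as written your compactness step never invokes the lower bound $I(\varphi) \geq C_1$, and without it that step is false.
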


\subsection{Proof of Theorem \ref{ent cpt}}

In this subsection, we discuss the entropy compactness theorem by using the integrability threshold in Section 4.

\begin{definition}
For two probability measures $d\nu_1, d\nu_2$ on $X$, we define the {\it relative entropy of $d\nu_2$ for $d\nu_1$} by 
\begin{equation*}
{\rm Ent}_{d\nu_1} \left( d\nu_2 \right):= \int_{X} \log \left( \frac{d \nu_2}{d\nu_1} \right) d d\nu_2
\end{equation*}
if $d\nu_2$ is absolutely continuous to $d\nu_1$ or ${\rm Ent}_{d\nu_1} \left( d\nu_2 \right) = + \infty$ otherwise.
Here, the measurable function $\frac{ d\nu_2}{d\nu_1}$ is the Radon-Nikod\'{y}m derivative of $d\nu_2$ for $d\nu_1$.
\end{definition}

\begin{rem}
It is well-known that the relative entropy satisfies ${\rm Ent}_{ d\nu_1} (d\nu_2) \geq 0$ and the equality holds if and only if $d\nu_1 = d\nu_2$ by the Jensen inequality.
It is also well-known that the relative entropy is the Legendre transform of the functional $\phi \to \log \int_X e^{\phi} d \nu_1$.
We refer to $\cite[\S2.3, \S2.4]{BBEGZ}$ for more details of the relative entropy.
\end{rem}

\begin{rem}
The relative entropy for some measure with singularities is sometimes called the {\it log} relative entropy (or log entropy).
In particular, the log relative entropy for the measure $d \lambda_\beta$ with {\it cone} singularities with angle $2\pi \beta$ plays an important role in \cite{Zh} for the study of the log Mabuchi K-energy.
We can regard our measure $d\mu_0$ as the limit of $d \lambda_\beta$ as $\beta \to 0$ (see singular {\kah} metrics and their approximations in \cite{Gu, Ao}). 
We can call the functional ${\rm Ent}_{\mu_0}$ the log relative entropy (for angle $0$), but in this paper, we just call it the relative entropy for simplicity.

\end{rem}

\begin{rem}
The outline of the proof of Theorem \ref{ent cpt} is similar to the proof in \cite[Theorem 4.44]{Da}.
The difference between them is that our case deals with the relative entropy for the (very!) singular reference measure $\mu_\gamma$.
So, we need to show that the integrals $\int_X e^{-\alpha \Phi_j} d \mu_\gamma, \,\, \int_X e^{-\alpha \Phi} d \mu_\gamma$ are uniformly bounded by choosing a {\it suitable} constant $\alpha >0$.
Moreover, we need to show that some $L^1 (\omega^n)$-convergence sequence also converges in the sense of $L^p (d \mu_\gamma)$-topology.

\end{rem}

{\it Proof of Theorem \ref{ent cpt}.}
As in the proof of Theorem \ref{alpha invariant}, we assume that $\gamma =0$.
By scaling, we assume that $V = \int_X \omega^n = 1$.
Since $\mathcal{U} = \{ \Phi_j \}_j$ is $d_1$-bounded, i.e., $d_1 (0 ,\Phi_j) <K$ for any $j$, the inequality (\ref{ineq11}) in Proposition \ref{ineq}, Lemma \ref{ineq2} and Theorem \ref{d1 L1} implies that $| \sup_X (\Phi_j) |$ and $I(\Phi_j)$ are bounded.
Thus, Lemma \ref{d1 bounded} implies that there exists $\Phi \in \mathcal{E}^{1} (X ,\omega) $ with $\Vert  \Phi_j  - \Phi \Vert_{L^1 (\omega^{n})} \to 0$ by taking a subsequence.
So, it follows from Theorem \ref{d1 convergence} that we only need to prove $\lim_{j \to \infty} I(\Phi_j) = I(\Phi)$.
From Proposition \ref{equivalence}, we know that $\limsup_{j \to \infty} I(\Phi_j) \leq I(\Phi)$.
By the second inequality (\ref{ineq12}) in Proposition \ref{ineq}, we have
$$
I(\Phi) \leq  I(\Phi_j) +  \int_{X} |\Phi - \Phi_j| \omega_{\Phi_j}^n,
$$
so we can show that $\liminf_{j \to \infty} I(\Phi_j) \geq I(\Phi)$ if the second term in the inequality above goes to $0$ when $j\to \infty$.
Thus, it suffices to prove that $\int_{X} |\Phi - \Phi_j| \omega_{\Phi_j}^n \to 0$ as $j \to \infty$.

We consider increasing and convex functions $a, b: \mathbb{R}_{\geq 0} \to \mathbb{R}_{\geq 0}$ defined by
$$
a(s):= (s+1) \log (s+1) -s , \,\,\,\,\,\,\, b(t):=  e^t -t-1.
$$
Note that $b$ is the Legendre conjugate of $a$, i.e., $b = a^*$.
Since $a(s) \leq s\log s + O(1)$ and we assume that ${\rm Ent}_{d \mu_0} (V^{-1}\omega_{\Phi_j}^{n} )<K $ for any $j$, we can find $K_r >0$ for any $r>0$ such that
$$
\left\Vert r^{-1} \frac{ \omega_{\Phi_j}^{n}}{d\mu_{0}}  \right\Vert_{L^a(d \mu_0)} < K_r .
$$
Here, the symbol $\Vert \cdot \Vert_{L^a(d \mu_0)}$ denotes the Orlicz norm with respect to the function $a$ (see \cite[Section 2]{BBEGZ} and \cite[Section 1]{Da}).
Since $b(t) \leq t^2 e^t$ for any $r>0$, the {\hol} inequality for a pair of $a$ and $b$ (see \cite[Proposition 2.15]{BBEGZ} and \cite[Proposition 1.3]{Da}) implies that
\begin{eqnarray*}
\int_{X} |\Phi - \Phi_j| \omega_{\Phi_j}^n 
&=& \int_{X} |\Phi - \Phi_j| \frac{\omega_{\Phi_j}^n}{d\mu_{0}} d\mu_{0}  \\
&\leq&  2 \left\Vert r^{-1} \frac{ \omega_{\Phi_j}^{n}}{d\mu_{0}}  \right\Vert_{L^a(d \mu_0)}  \left\Vert   r|\Phi - \Phi_j|  \right\Vert_{L^b(d \mu_0)}
  \\
&\leq& 2K_r \left\Vert   r|\Phi - \Phi_j|  \right\Vert_{L^b(d \mu_0)}
\end{eqnarray*}

Note that
$$A:= \min \left\{  \,  \alpha_{\omega |_D} (d \lambda^\prime) , \,  \frac{1}{\nu_D (\mathcal{U})} \right\}
$$
is positive by Theorem \ref{Tian} and the assumption of Theorem \ref{ent cpt}.
So, we can find sufficiently small $r>0$ such that $\alpha:= 3r < A$.
By using the {\hol} inequality, we obtain
\begin{eqnarray*}
\int_X b\left(  r|\Phi - \Phi_j| \right) d\mu_{0}   &\leq& r^2 \int_{X} |\Phi - \Phi_j|^2 \exp(r|\Phi - \Phi_j| )d\mu_{0} \\
 &\leq& r^2 \Vert \Phi - \Phi_j \Vert_{L^6 (d\mu_{0} )}^{2} \Vert \exp (-r \Phi_j) \Vert_{L^3 (d\mu_{0} )}  \Vert \exp (-r\Phi) \Vert_{L^3 (d\mu_{0} )} .
\end{eqnarray*}
By using Corollary \ref{cpt ver}, we can find a constant $C >0$ independent of $j$ such that $\int_X e^{- \alpha \Phi_j} d \mu_0 \leq  C
$ for all $j$.
Moreover, Fatou's lemma implies that we have $ \int_X e^{-\alpha \Phi} d \mu_0 \leq C$.
By Theorem \ref{local Lp}, we can show that $\Vert \Phi - \Phi_j \Vert_{L^6 (d\mu_{0} )} \to 0 \,\,\, (j \to \infty)$, so it follows that $\int_{X} |\Phi - \Phi_j| \omega_{\Phi_j}^n \to 0$.
Thus, we have finished the proof of Theorem \ref{ent cpt}.
\sq

\begin{exam}(Comparison with Theorem 2.8 in \cite{BDL1})
\label{diverge}
For sufficiently small $\delta \geq 0$, we set a function
$$
\Psi_{\frac{1}{2} - \delta} := - \epsilon (\log \Vert s_D \Vert^{-2}_{h})^{\frac{1}{2} - \delta}
$$
for a fixed sufficiently small constant $\epsilon > 0$.
Note that $\Psi_{\frac{1}{2} - \delta} \in \mathcal{E}^1 (X,\omega)$ for $\delta > 0$.
By the direct computation, we obtain a uniform constant $C > 0$ such that
$$
\left| \sup_X \Psi_{\frac{1}{2} - \delta} \right| <C ,\,\,\,  {\rm Ent }_{d \mu_0}\left(\omega_{\Psi_{\frac{1}{2} - \delta}}^{n} \right) <C
$$
for any $\delta \geq 0$, i.e., the sequence $\{\Psi_{\frac{1}{2} - \delta} \}_{\delta > 0} \subset \mathcal{E}^1 (X,\omega)$ satisfies the assumption of Theorem 2.8 in \cite{BDL1}.
However, if we set $\delta_{j} : = 1/j$, the sequence $\{\Psi_{\frac{1}{2} - \delta_j} \}_{j} $ does not have a $d_1$-convergent subsequence, because $\Psi_{\frac{1}{2}} \notin \mathcal{E}^1 (X,\omega) $.
Note that this is an example of quasi-plurisubharmonic functions with finite entropy (with respect to the relative entropy for the reference measure ${\mu_0}$!) which is not of finite energy (see \cite[Theorem 2.17]{BBEGZ} and \cite[Theorem A]{DGL}).
\end{exam}

\medskip

{\it Proof of Corollary \ref{DNL}.}
It follows from (b) of Theorem 2 in \cite{DL}, there exists $A_1, A_2 >0$ depending only on $B$ such that $\Phi_j \geq A_1 \Psi + A_2$ for any $j$.
Thus, we can find $r>0$ such that $r \Psi \leq A_2$ on the neighborhood $V$ in the definition of the log-log threshold $\nu_D$, so we obtain $\nu_D (\Phi_j) \leq A_1+r$ for any $j$.
We write $f := \frac{1}{ \Vert s_D \Vert^{2} (\log \Vert s_D \Vert^{-2})^{2}}$.
So, it follows from the inequality $\omega_{\Phi_j}^{n}  \leq B f \omega^n \approx B d\mu_0$ that there exists some constant $C_B > 0$ depending only on $B$ such that ${\rm Ent}_{d\mu_0} (V^{-1}\omega_{\Phi_j}^{n} ) < C_B$.
Thus, we complete the proof by Theorem \ref{ent cpt}.\sq




\bigskip
\address{
National Institute of Technology (KOSEN),\\
Wakayama College,\\
77, Nojima, Nada-chou, Gobo-shi\\
Wakayama, 644-0023\\
Japan
}
{aoi@wakayama-nct.ac.jp \,\, {\it or} \,\, takahiro.aoi.math@gmail.com }

\end{document}